\newtheorem{Theorem}{Theorem}[section]
\newtheorem{Lemma}[Theorem]{Lemma}
\newtheorem{Corollary}[Theorem]{Corollary}
\theoremstyle{definition}
\newtheorem{Example}[Theorem]{Example}
\newtheorem{Remark}[Theorem]{Remark}
\renewcommand\Re{{\mathrm{Re}}}
\numberwithin{equation}{section}
\begin{document}
\title[Expressions of Schur multiple zeta-functions by zeta-functions of root systems]{Expressions of Schur multiple zeta-functions of anti-hook type by zeta-functions of root systems\large }
\author{Kohji Matsumoto and Maki Nakasuji}
\thanks{The first author is supported by Grant-in-Aid for Scientific Research (B) 18H01111, and the second author is supported by Grant-in-Aid for Scientific Research (C) 18K03223.}

\date{}
\maketitle
%
%
\vskip 1cm
\par\noindent

\begin{abstract} 
We investigate relations among Schur multiple zeta functions and zeta-functions of root systems attached to semisimple Lie algebras.
Schur multiple zeta functions are defined as sums over semi-standard Young tableaux.
Then, assuming the Young tableaux is of anti-hook shape, 
we show that they can be written in terms of modified zeta-functions of root systems of type $A$.
Our proof is quite computational, but we also give a pictorial interpretation of
our argument in terms of Young tableaux.
It is also possible to understand that one of our theorems gives an expression of
Schur multiple zeta functions by an analogue of Weyl group multiple Dirichlet series in the sense of Bump et al.
 By combining with a result of Nakasuji, Phuksuwan and Yamasaki, our theorems yield a new method of finding functional relations among zeta-functions of root systems.
 \end{abstract}

{\small{Keywords: {
Schur multiple zeta functions, zeta-functions of root systems, 
Euler-Zagier multiple zeta-functions, functional relations,
Weyl group multiple Dirichlet series, harmonic product}
 
{\small{AMS classification:}  11M32, 17B22.}

%
%
%
\section{Introduction}

The theory of various multiple zeta-functions has recently been studied quite
extensively.     An important class of multiple zeta-functions are 
Euler-Zagier multiple zeta-functions, which are a natural generalization of the Riemann zeta-function, and 
relations among their special values are 
of great interest.

The first-named author, with Y. Komori and H. Tsumura, introduced the notion
of zeta-functions of root systems (see \cite{KomoriMatsumotoTsumura} \cite{KMT-JMSJ}), 
which is a further generalization
of Euler-Zagier multiple zeta-functions, and is also regarded as a multi-variable version
of Witten zeta-functions (see \cite{Witten}) attached to semisimple Lie algebras.    
It has been shown that various
relations among multiple zeta values can be regarded as
special cases of functional relations among zeta-functions of root systems, 
whose proof is based on properties of Weyl groups.

On the other hand,
the second-named author, with O. Phuksuwan, and Y. Yamasaki,
introduced another class of representation-theoretic multiple zeta-functions, called
Schur multiple zeta-functions (see \cite{NPY}), associated with semi-standard Young tableaux.    This connects multiple zeta values and its variant multiple
zeta-star values in a natural way.
For recent developments in the theory of Schur multiple zeta-functions, see
\cite{Bach} \cite{BachY} \cite{NN}.

The aim of the present paper is to show relations among these two
representation-theoretic multiple zeta-functions.     Our main theorems
imply that Schur multiple zeta-functions of anti-hook type can be written
in terms of (generalized, or modified) zeta-functions of root systems of type $A$.
These theorems, combined with a result in \cite{NPY}, give a new method of
finding functional relations among zeta-functions of root systems.

First of all we have to define those two classes of multiple zeta-functions.
This will be done in the next section.
Then we will state the first main theorem (Theorem \ref{Thexpression}) and the second main theorem (Theorem \ref{HurwitzTheorem}) in Section
\ref{sectionresults} and in Section \ref{sectionresults2}, respectively, with the
discussion of some consequences of those theorems.
The proofs of the first and the second main theorems will be given in
Section \ref{proof_FMT} and Section \ref{proof_SMT}, respectively.
Our proofs of those theorems are quite computational, but in Section \ref{pictorial}
we will give a pictorial interpretation of
our argument in terms of Young tableaux.

It is to be stressed that in the statement of Theorem \ref{HurwitzTheorem}, an
analogue of Weyl group multiple Dirichlet series in the sense of D. Bump et al.
(see \cite{Bump}) appears.    This point will also be discussed in
Section \ref{sectionresults2}.    

When the length of the associated Young tableaux is small, the structure is easier to
analyze, and therefore it is possible to obtain some simpler expressions.     This topic
will be considered in the last section.

\section{Definitions of relevant multiple zeta-functions}\label{sec2}

Let $\mathbb{N}$, $\mathbb{C}$ be the set of positive integers, and of complex numbers,
respectively.

The most fundamental class of multiple zeta-functions is that of Euler-Zagier multiple
zeta-functions, defined by
\begin{align}\label{EZ_def}
\zeta_{EZ,r}(s_1,\ldots,s_r)=\sum_{1\leq m_1<\cdots<m_r}
{m_1^{-s_1}\cdots
m_r^{-s_r}}
\end{align}
for $r\geq 1$, where $s_1,\ldots,s_r$ are complex variables.    The ``star''-variant
of Euler-Zagier $r$-variable zeta-functions is 
\begin{align}\label{EZstar_def}
\zeta_{EZ,r}^{\star}(s_1,\ldots,s_r)=\sum_{1\leq m_1\leq \cdots\leq m_r}
{m_1^{-s_1}\cdots m_r^{-s_r}}.
\end{align}

We next define Schur multiple zeta functions of anti-hook shaped type.
Let $\lambda=(\lambda_1, \cdots, \lambda_m)$ be a non-increasing sequence of 
$n\in\mathbb{N}$, i.e. $\lambda_1\geq \lambda_2\geq \cdots \lambda_m\geq 1$ with $|\lambda|:=\sum_i\lambda_i=n$.
Then a {\it Young diagram} of shape $ \lambda$ is obtained by drawing $\lambda_i$ boxes in the $i$-th row.
We may identify $\lambda$ with the Young diagram of shape $\lambda$, say $\{(i, j)\in {\mathbb Z}^2 | 1\leq i\leq m, 1\leq j\leq \lambda_i\}$.
A {\it Young tableau} is a filling of the diagram.
If $X$ is some set, 
we call $T= (t_{ij})$, which is obtained by putting $t_{ij}\in X$ into box $(i, j)$ of $\lambda$, a  {\it Young tableau of shape $\lambda$ over $X$}.
Let $T(\lambda,X)$ be the set of all Young tableaux of shape $\lambda$ over $X$, and $\mathrm{SSYT}(\lambda)\subset T(\lambda,\mathbb{N})$ the set of all semi-standard Young tableaux of shape $\lambda$ which satisfies
(i) weakly increasing across each row (ii) strictly increasing down each column.
For  ${\pmb s}=(s_{ij})\in T(\lambda,\mathbb{C}),$ the Schur multiple zeta-function associated with $\lambda$ is defined as in \cite{NPY} by the series 
$$
\zeta_{\lambda}({ \pmb s})=\sum_{M\in \mathrm{SSYT}(\lambda)}
{M^{ -\pmb s}}, 
$$
where $M^{ -\pmb s}=\displaystyle{\prod_{(i, j)\in \lambda}m_{ij}^{-s_{ij}}}$ for $M=(m_{ij})\in \mathrm{SSYT}(\lambda)$. This series converges absolutely if ${\pmb s}\in W_{\lambda}$ where 
\[
  W_\lambda =
\left\{{\pmb s}=(s_{ij})\in T(\lambda,\mathbb{C})\,\left|\,
\begin{array}{l}
 \text{$\Re(s_{ij})\ge 1$ for all $(i,j)\in \lambda \setminus C(\lambda)$ } \\[3pt]
 \text{$\Re(s_{ij})>1$ for all $(i,j)\in C(\lambda)$}
\end{array}
\right.
\right\}.
\]
 with $C(\lambda)$ being the set of all corners of $\lambda$.
 
If the diagram consists of just one column, $\zeta_{\lambda}({ \pmb s})$ is nothing but
the Euler-Zagier multiple zeta-function \eqref{EZ_def}, and if the diagram consists of one row, 
$\zeta_{\lambda}({ \pmb s})$ is the star-variant \eqref{EZstar_def}.
Schur multiple zeta-functions naturally interpolate between these two fundamental notions
\eqref{EZ_def} and \eqref{EZstar_def}.
 
 A skew Young diagram $\theta$ is a diagram obtained as a set difference of two Young diagrams of partitions $\lambda$ and $\mu$
 satisfying $\mu\subset \lambda$, that is $\mu_i\le \lambda_i$ for all $i$.
 In this case, we write $\theta=\lambda/\mu$ and $|\theta|=|\lambda|-|\mu|$.
 Specifically, for $k, \ell \in {\mathbb N}$, if 
 $\lambda=(
\underbrace{k+1, \cdots, k+1}_{\ell+1 \;{\rm times}}
)$
 and 
  $\mu=
(\underbrace{k, \cdots, k}_{\ell\;{\rm times}}
)$
, we call $\lambda/\mu$ an {\it anti-hook type Young diagram} and write $\theta={\rm rib}(k | \ell)$.
The following tableau is of type $\theta={\rm rib}(4 | 3)$.
 $$
\ytableausetup{boxsize=normal}  
\begin{ytableau}
  \none & \none & \none & \none&   \\
  \none & \none & \none & \none&   \\
  \none & \none & \none & \none &  \\
  & &  &  & 
\end{ytableau}$$
 We similarly use the notation $T(\theta,X)$ for a set $X$, and
 $\mathrm{SSYT}(\theta)$,
 to denote the set of all tableaux over $X$, and the set of all semi-standard Young tableaux of shape $\theta$, respectively.

 Let ${\pmb s}=(s_{ij})\in T(\theta,\mathbb{C})$.
 We define the Schur multiple zeta-function associated with $\theta$ by
\begin{equation}
\label{def:skewSMZ}
 \zeta_{\theta}({\pmb s})
=\sum_{M\in \mathrm{SSYT}(\theta)}
{M^{-\pmb s}}.
\end{equation}


Next we proceed to define zeta-functions of root systems.
Let $\frak g$ be a complex semisimple Lie algebra over ${ \mathbb C}$ and
$\widehat{G}$ be the set of equivalence classes of finite dimensional irreducible representations of ${\frak g}$.
The Witten zeta-function
$$
\zeta_W(s; {\frak g})=\sum_{\varphi\in \widehat{G}}(\dim \varphi)^{-s}.
$$
was introduced by Witten in \cite{Witten}. Weyl gave a formula for the dimension of the irreducible representation, and by using this formula,
the Witten zeta-function is expressed in terms of the corresponding root system.
The zeta-functions of root systems was first introduced by Komori, Matsumoto and Tsumura   (\cite{KomoriMatsumotoTsumura})
as a multi-variable version of this expression to analyze the behavior of $\zeta_W(s; { \frak g})$.
Here we review the definition of zeta-functions of root systems following \cite{KomoriMatsumotoTsumura}.

Let $V$ be an $r$-dimensional real vector space equipped with an inner product $\langle \cdot, \cdot \rangle$
.
The dual space $V^*$ is identified with $V$ via this inner product.
Let $\Delta$ be a finite reduced root system in $V$ and
$\Psi=\{\alpha_1, \cdots, \alpha_r\}$ its fundamental system.
We denote by $\alpha^{\vee}=\frac{2\alpha}{\langle \alpha, \alpha \rangle}$ the coroot associated with a root $\alpha$.    The transformation 
$s_{\alpha} : V\to V$ given by $s_{ \alpha}(x)=x-\frac{2\langle \alpha, x \rangle}{\langle \alpha, \alpha\rangle} \alpha$ is called the reflection attached to $\alpha$. 
Let $\Delta_+$ and $\Delta_-$ be the sets of all positive roots and negative roots, respectively.
Let $\Lambda=\{w_1, \cdots, w_r\}$ be the set of fundamental weights defined by
$\langle \alpha_i^{\vee}, w_j\rangle = \delta_{ij}$ (Kronecker's delta).
Then the Witten zeta-function is expressed as 
$$\zeta_W(s, { \frak g})=K({ \frak g})^s\sum_{m_1=1}^{ \infty}\cdots \sum_{m_r=1}^{ \infty}
\prod_{\alpha\in { \Delta}^+}\langle \alpha^{\vee}, m_1w_1+\cdots + m_rw_r\rangle^{-s},
$$
where $K({ \frak g})=\prod_{\alpha\in \Delta_+}\langle \alpha^{ \vee}, \rho\rangle$ with $\rho=w_1+\cdots+w_r$ being the lowest strongly dominant form.
To extend this expression to its multi-variable version, 
let ${\underbar{\bf s}}=(s_{\alpha})_{\alpha\in \Delta_+}\in { \mathbb C}^{|\Delta_+|}$.
Then the zeta-function of the root system $\Delta$ is defined by
$$
\zeta_r({\underbar{\bf s}}, \Delta):=\sum_{m_1=1}^{\infty}\cdots \sum_{m_r=1}^{\infty}\prod_{\alpha\in \Delta_+}
\langle \alpha^{\vee}, m_1w_1+\cdots +m_rw_r\rangle^{-s_{\alpha}}.
$$
It is well known that simple Lie algebras are classified 
into seven types, $A, B, C, D, E, F$ and $G$.    We denote the corresponding root system
as $\Delta=\Delta(\mathcal{X}_r)$, where $\mathcal{X}$ is one of $A,B,\ldots,G$, and we write the associated
zeta-function by
$\zeta_r({\underbar{\bf s}}, \mathcal{X}_r)$ instead of $\zeta_r({\underbar{\bf s}}, \Delta(\mathcal{X}_r))$, for short.
For example, in the case $\Delta=\Delta(A_r)$ each root is parametrized by
$(i,j)$ ($1\leq i,j\leq r+1$, $i\neq j$), and
\begin{align}\label{A_def}
\zeta_r({\underbar{\bf s}}, A_r)=\sum_{m_1=1}^{\infty}\cdots \sum_{m_r=1}^{\infty}\prod_{1\leq i<j\leq r+1}
(m_i+\cdots +m_{j-1})^{-s{(i,j)}},
\end{align}
where $s(i,j)$ is the variable corresponding to the root parametrized by
$(i,j)$.
For each term $(m_i+\cdots +m_{j-1})^{-s{(i,j)}}$, we call $j-i$ the {\it length} of this
term.
Here (and in what follows) we understand that the components of the vector $\underbar{{\bf s}}=(s{(i,j)})$ 
is arranged according to the length of the corresponding term,
that is,
\begin{multline}\label{ordervector}
\underbar{{\bf s}}=(s{(1,2)},s{(2,3)},\ldots,s{(r,r+1)},s{(1,3)},s{(2,4)},\ldots,s{(r-1,r+1)},\;\ldots,\;
\\
s{(1,r)}, s{(2,r+1)},s{(1,r+1)}).
\end{multline}

Zeta-functions of root systems are not only a generalization of Witten zeta-functions,
but also a generalization of Euler-Zagier multiple zeta-functions.     In fact, 
\eqref{EZ_def} can be regarded as a special case of $\zeta_r(\underline{\bf s}, A_r)$.
(Putting all variables $s{(i,j)}=0$ except $(i,j)=(1,2), (1,3),\ldots,(1,r+1)$, we see that
\eqref{A_def} reduces to \eqref{EZ_def}; see \cite{KMT-MZ}.)
It is also possible to regard \eqref{EZ_def} as a special case of $\zeta_r(\underline{\bf s}, C_r)$
(see \cite{KMT-FA}).

In this paper we will also use the following generalizations of \eqref{A_def}:
For $r>0$ and $0\leq d \leq r$, 
\begin{align}\label{Ahalfstar_def}
\zeta_{r, d}^{ \bullet}(\underline{\bf s},A_r)=\underbrace{
\left(\sum_{m_1=0}^{\infty}\cdots \sum_{m_d=0}^{\infty}\right)'}_{d \text{ times}}
\underbrace{\sum_{m_{d+1}=1}^{\infty}\cdots \sum_{m_r=1}^{\infty}}_{r-d \text{ times}}\prod_{1\leq i<j\leq r+1}
(m_i+\cdots +m_{j-1})^{-s{(i,j)}},
\end{align}
where the prime means that 
the terms $(m_i+\cdots +m_{j-1})^{-s{(i,j)}}$, where $1\leq i<j\leq d+1$ and $m_i=\cdots =m_{j-1}=0$, are omitted.
Obviously $\zeta_{r, 0}^{\bullet}(\underline{\bf s},A_r)=\zeta_r(\underline{\bf s}, A_r)$.
And we also introduce
\begin{align}\label{AH_def}
\zeta_r^H(\underline{\bf s},x, A_r)=\sum_{m_1=1}^{\infty}\cdots \sum_{m_r=1}^{\infty}\prod_{1\leq i<j\leq r+1}
(x+m_i+\cdots +m_{j-1})^{-s{(i,j)}}\qquad (x> 0),
\end{align}
and
\begin{align}\label{Ahalfstarbullet_def}
\zeta_{r, d}^{ \bullet, H}(\underline{\bf s}, x, A_r)=\underbrace{
\left(\sum_{m_1=0}^{\infty}\cdots \sum_{m_d=0}^{\infty}\right)}_{d \text{ times}}
\underbrace{\sum_{m_{d+1}=1}^{\infty}\cdots \sum_{m_r=1}^{\infty}}_{r-d \text{ times}}\prod_{1\leq i<j\leq r+1}
(x+m_i+\cdots +m_{j-1})^{-s{(i,j)}},
\end{align}
so 
$\zeta_{r, 0}^{\bullet, H}(\underline{\bf s}, x, A_r)=\zeta_r^H(\underline{\bf s}, x, A_r)$.
When $r=0$, we set $\zeta_0=\zeta_0^H=\zeta_{0, 0}^{ \bullet}=\zeta_{0, 0}^{\bullet, H}=1$.
We can say $\zeta_{r, d}^{\bullet}$ may be regarded as a ``hybrid" version of zeta-functions and zeta-``star"-functions of root systems 
, and $\zeta_{r}^H$ and $\zeta_{r, d}^{\bullet, H}$ may be regarded as a kind of
zeta-functions of root systems of ``Hurwitz type''.   A more general type of multiple zeta-functions of Hurwitz type was 
introduced and used in \cite[Section 8]{KMT-PLMS}
in the study of multiple $L$-functions of root systems.
\section{The first main theorem}\label{sectionresults}

Consider the case of type $\theta={\rm rib}(k | \ell)$.
Let
\begin{equation}\label{stableau}
{\pmb s}=
\ytableausetup{boxsize=normal}  
\begin{ytableau}
  \none & \none & \none&   s_{k \ell}\\
  \none & \none & \none&   \vdots\\
  \none & \none & \none &   s_{k1}\\
 s_{00} & s_{10}  & \cdots & s_{k 0}
\end{ytableau}\quad .
\end{equation}
It is to be noted that here (and in what follows), the numbering of the double indices
is different from that in the preceding section.
We have
\begin{equation}\label{hookSMZ}
 \zeta_{\theta}({\pmb s}) =
 \sum_{M\in {\rm{SSYT}}( \theta)}
 {m_{00}^{-s_{00}}m_{10}^{-s_{10}}\cdots m_{k 0}^{-s_{k 0}}m_{k 1}^{-s_{k 1}}\cdots m_{k \ell}^{-s_{k \ell}}}
 \end{equation}
for ${\pmb s}\in W_{\lambda}$. 

In \cite[(4.2)]{NPY}, it has been shown that any Schur multiple zeta-function has the expression of linear combination of $\zeta_{EZ}$ or $\zeta^{\star}_{EZ}$: 
\begin{eqnarray}
\zeta_\theta({\pmb s}) &=& \sum_{{\bf t} \preceq {\bf s} } \zeta_{EZ, {\rm length}({\bf t})} ({\bf t}),   \label{zetalambdas} \label{linearcombinationEZ}\\
\zeta_\theta({\pmb s}) &=& \sum_{{\bf t} \preceq {\bf s}' } (-1)^{|\theta|-{\rm length}({\bf t})} \zeta_{EZ, {\rm length}({\bf t})}^{\star} ({\bf t}),  \label{linearcombinationEZstar}
\end{eqnarray} 
where ${\rm length}({\bf t})=m$ for ${\bf t}=(t_1, \cdots, t_m)\in {\mathbb C}^m$.
Recall the explanation of notation \mbox{`` $\preceq$ "} in \cite{NPY} here.
For $|\lambda|=n$,
 let $\mathcal{F}(\lambda)$ be the set of all bijections $f:\lambda\to\{1,2,\ldots,n\}$
 satisfying the following two conditions:
\begin{itemize}
\item[(i)]
 for all $i$, $f((i,j))<f((i,j'))$ if and only if $j<j'$, 
\item[(ii)]
 for all $j$, $f((i,j))<f((i',j))$ if and only if $i<i'$.
\end{itemize} 
 Moreover, for $T=(t_{ij})\in T(\lambda,X)$, 
 put
\[
 V(T)=
\left\{\left.
\left(t_{f^{-1}(1)},t_{f^{-1}(2)},\ldots,t_{f^{-1}(n)}\right)\in X^{n}\,\right|\,
f\in \mathcal{F}(\lambda)
\right\}.
\] 
 Furthermore, when $X$ has an addition $+$,
 we write ${\pmb w} \preceq T$ for ${\pmb w}=(w_1,w_2,\ldots,w_m)\in X^m$
 if there exists $(v_1,v_2,\ldots,v_{n})\in V(T)$ satisfying the following:
 for all $1\le k\le m$, there exist $1\le h_k\le m$ and $l_k\ge 0$ such that  
\begin{itemize}
\item[(i)]
 $w_k=v_{h_k}+v_{h_k+1}+\cdots +v_{h_k+l_k}$,
\item[(ii)]
 there are no $i$ and $i'$ such that $i\ne i'$ and $t_{ij},t_{i'j}\in\{v_{h_k},v_{h_k+1},\ldots ,v_{h_k+l_k}\}$ for some $j$,
\item[(iii)]
 $\bigsqcup^{m}_{k=1}\{h_k,h_k+1,\ldots,h_k+l_k\}=\{1,2,\ldots,n\}$.
\end{itemize}

Since $\zeta_{EZ}$ and $\zeta_{EZ}^{\star}$ are known to be continued 
meromorphically to the whole space, 
\eqref{linearcombinationEZ} and \eqref{linearcombinationEZstar} lead to the following
\begin{Lemma}\label{Lemmeroconti}
$\zeta_\theta({\pmb s})$ can be analytically continued to a meromorphic function in the whole space ${\mathbb C}^{k+\ell+1}$.
\end{Lemma}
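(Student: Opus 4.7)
The plan is to deduce the meromorphic continuation of $\zeta_\theta$ directly from the reduction formula \eqref{linearcombinationEZ} (or equivalently \eqref{linearcombinationEZstar}) established in \cite{NPY}, combined with the classical meromorphic continuation of Euler-Zagier multiple zeta-functions. The argument is essentially a bookkeeping exercise, since both ingredients are already in hand.

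First I would observe that the anti-hook shape $\theta=\mathrm{rib}(k|\ell)$ has exactly $k+\ell+1$ boxes (the $\ell$ upper boxes $s_{k1},\dots,s_{k\ell}$ together with the $k+1$ bottom boxes $s_{00},\dots,s_{k0}$), so $\pmb s\in\mathbb{C}^{k+\ell+1}$. The right-hand side of \eqref{linearcombinationEZ} is a \emph{finite} sum (indexed by tuples ${\bf t}\preceq\pmb s$) of Euler-Zagier multiple zeta-functions $\zeta_{EZ,\mathrm{length}({\bf t})}({\bf t})$, and from the definition of $\preceq$ each component of ${\bf t}$ is an integer linear combination (in fact a $\{0,1\}$-sum) of some of the entries $s_{ij}$. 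Thus the assignment $\pmb s\mapsto {\bf t}$ is a linear map $\mathbb{C}^{k+\ell+1}\to \mathbb{C}^{\mathrm{length}({\bf t})}$.

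Next I would invoke the known fact, proved independently by Akiyama--Egami--Tanigawa, Zhao, and Matsumoto, that $\zeta_{EZ,r}$ admits a meromorphic continuation to all of $\mathbb{C}^r$, with polar divisor contained in a locally finite union of affine hyperplanes (and similarly for $\zeta_{EZ,r}^\star$, since the star and non-star versions differ by a finite $\mathbb{Z}$-linear combination of lower-depth Euler--Zagier values). Pulling each $\zeta_{EZ,\mathrm{length}({\bf t})}({\bf t})$ back along the corresponding linear map yields a meromorphic function on $\mathbb{C}^{k+\ell+1}$: the polar locus is just the preimage of the polar divisor of $\zeta_{EZ}$, and this preimage remains a locally finite union of hyperplanes because the linear map is surjective on the relevant coordinates (or, when it is not, the pullback is still meromorphic as composition of a meromorphic function with a holomorphic map, since a linear map can be factored as a projection onto its image).

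Finally, a finite sum of meromorphic functions on $\mathbb{C}^{k+\ell+1}$ is again meromorphic on $\mathbb{C}^{k+\ell+1}$, which gives the desired continuation of $\zeta_\theta(\pmb s)$. I do not expect any real obstacle here: the only place where one must pay slight attention is checking that the pullback along the linear substitution $\pmb s\mapsto {\bf t}(\pmb s)$ genuinely produces a meromorphic, not merely holomorphic-off-a-pluripolar-set, object; this is automatic for linear substitutions into meromorphic functions whose polar locus is a union of hyperplanes. Hence the proof reduces to citing \eqref{linearcombinationEZ} and the standard meromorphic continuation of Euler-Zagier zeta-functions.
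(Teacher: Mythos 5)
Your proposal is correct and follows exactly the paper's own (very brief) argument: the authors likewise deduce the lemma from the finite linear-combination expression \eqref{linearcombinationEZ} (and \eqref{linearcombinationEZstar}) together with the known meromorphic continuation of $\zeta_{EZ}$ and $\zeta_{EZ}^{\star}$. Your additional care about pulling back along the linear substitutions ${\pmb s}\mapsto {\bf t}$ is a legitimate elaboration of the same idea, not a different route.
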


In the present paper
we will obtain another expression of this $\zeta_{\theta}({\pmb s})$  in terms of Euler-Zagier mutiple zeta-functions, which is our first main result.

\begin{Theorem}\label{Thexpression}
For a diagram of type $\theta={\rm rib}(k | \ell)$ and 
${\pmb s}=(s_{ij})\in T(\theta,\mathbb{C})$ given by \eqref{stableau}, the identity
\begin{equation}\label{thm1}
 \zeta_{\theta}({\pmb s}) =
\sum_{i=0}^{k}(-1)^{k-i}
\zeta_{EZ, i}^{\star}(s_{00}, s_{10}, \cdots, s_{i-1, 0})
\zeta_{EZ, \ell+k-i+1}(s_{k\ell}, s_{k, \ell-1}, \cdots, s_{k0}, s_{k-1, 0}, \cdots, s_{i0}),
\end{equation}
holds in the whole space $\mathbb{C}^{k+\ell+1}$,
where $\zeta^{\star}_{EZ, i}=1$ for $i=0$.
\end{Theorem}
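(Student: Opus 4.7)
The plan is to verify the identity in the region of absolute convergence and then extend it to the whole space $\mathbb{C}^{k+\ell+1}$ by meromorphic continuation, using Lemma~\ref{Lemmeroconti} together with the well-known meromorphy of $\zeta_{EZ,r}$ and $\zeta^{\star}_{EZ,r}$.

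Writing $f_j := m_{j,0}$ for brevity, the defining series \eqref{hookSMZ} reads
\begin{equation*}
\zeta_\theta(\pmb s) = \sum_{\substack{f_0 \le f_1 \le \cdots \le f_k \\ m_{k\ell} < \cdots < m_{k1} < f_k}} f_0^{-s_{00}} \cdots f_k^{-s_{k0}} m_{k1}^{-s_{k1}} \cdots m_{k\ell}^{-s_{k\ell}}.
\end{equation*}
Applying the elementary identity
$\prod_{j=1}^k [f_{j-1} \le f_j] = \sum_{T \subseteq \{1,\ldots,k\}} (-1)^{|T|} \prod_{j \in T} [f_{j-1} > f_j]$
(here $[\,\cdot\,]$ denotes the Iverson bracket), I rewrite the LHS as $\sum_T (-1)^{|T|} \Sigma_T$, where $\Sigma_T$ is the sum over $(f,m)\in\mathbb{N}^{k+\ell+1}$ satisfying $f_{j-1} > f_j$ for every $j\in T$ (with $f_{j-1}, f_j$ unconstrained for $j\notin T$) together with the column condition $m_{k\ell} < \cdots < m_{k1} < f_k$.

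Next I will expand the RHS the same way. In the $i$-th summand, the factor $\zeta^\star_{EZ,i}(s_{00},\ldots,s_{i-1,0})$ imposes $f_0 \le \cdots \le f_{i-1}$, whereas $\zeta_{EZ,\ell+k-i+1}(s_{k\ell},\ldots,s_{k0},s_{k-1,0},\ldots,s_{i0})$ imposes the strict chain $m_{k\ell}<\cdots<m_{k1}<f_k<f_{k-1}<\cdots<f_i$; in particular $f_{j-1}>f_j$ for $j\in\{i+1,\ldots,k\}$ while $f_{i-1}$ and $f_i$ are unrelated. Expanding each weak inequality of the star factor as $1-[f_{j-1}>f_j]$ and indexing the resulting expansion by $S\subseteq\{1,\ldots,i-1\}$, the accumulated sign becomes $(-1)^{k-i}(-1)^{|S|} = (-1)^{|T|}$ where $T := S \sqcup \{i+1,\ldots,k\}$, and the constrained sum is exactly $\Sigma_T$.

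To finish, I observe that the correspondence $(i,S)\longmapsto T$ is a bijection between $\{(i,S) : 0\le i\le k,\ S\subseteq\{1,\ldots,i-1\}\}$ and the power set of $\{1,\ldots,k\}$: given $T$, the requirements $\{i+1,\ldots,k\}\subseteq T$ and $i\notin T$ pin down $i = \max\bigl((\{1,\ldots,k\}\setminus T)\cup\{0\}\bigr)$, after which $S = T\cap\{1,\ldots,i-1\}$ is determined. Reindexing the RHS by $T$ therefore gives $\sum_T (-1)^{|T|}\Sigma_T$, matching the LHS. The main obstacle is purely bookkeeping: correctly decoding the orderings that the factors $\zeta^\star_{EZ}$ and $\zeta_{EZ}$ impose on the renamed variables, handling the boundary cases $i=0$ (empty star factor) and $i=k$ (the EZ factor is just $\zeta_{EZ,\ell+1}(s_{k\ell},\ldots,s_{k0})$ over the column), and verifying the sign count. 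Absolute convergence legitimizes the rearrangements, and Lemma~\ref{Lemmeroconti} then promotes the identity to the whole space $\mathbb{C}^{k+\ell+1}$.
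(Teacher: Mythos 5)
Your argument is correct, and it rests on exactly the same elementary fact as the paper's proof: the decomposition $[f_{j-1}\le f_j]=1-[f_{j-1}>f_j]$, which is the paper's key step \eqref{basicdecomp}. The difference is organizational. The paper applies this decomposition one inequality at a time, starting from the corner condition $m_{k-1,0}\le m_{k0}$ and recursing leftward; at each stage the ``unrestricted'' branch immediately factors as $\zeta^{\star}_{EZ,i}\times\zeta_{EZ,\ell+k-i+1}$ (because the two blocks of variables become independent), so the right-hand side of \eqref{thm1} is generated term by term and the $\zeta^{\star}$ factors never need to be opened up. You instead expand all $k$ weak inequalities simultaneously into the $2^k$ sums $\Sigma_T$, do the same expansion on the right-hand side (which forces you to also expand the star factors over subsets $S\subseteq\{1,\ldots,i-1\}$), and then match the two expansions via the bijection $(i,S)\mapsto S\sqcup\{i+1,\ldots,k\}$, with $i$ recovered as the largest index not in $T$. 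Your bijection and sign count are right, the boundary cases $i=0,k$ are handled correctly, and the passage from $W_\theta^{\circ}$ to all of $\mathbb{C}^{k+\ell+1}$ via Lemma \ref{Lemmeroconti} and the meromorphy of $\zeta_{EZ}$, $\zeta^{\star}_{EZ}$ is the same as in the paper. What the paper's recursion buys is that it never touches the right-hand side and needs no matching step; what your symmetric expansion buys is a cleaner global statement (both sides equal $\sum_T(-1)^{|T|}\Sigma_T$) at the cost of an extra combinatorial lemma.
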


This theorem is essentially a kind of harmonic product formula.
Combining Theorem \ref{Thexpression} with \eqref{linearcombinationEZ} and
\eqref{linearcombinationEZstar},
we obtain the following functional relations among $\zeta_{EZ}$ and 
$\zeta_{EZ}^{\star}$:
\begin{Corollary}\label{functionalrelation}
The following functional relations hold among $\zeta_{EZ}$ and $\zeta_{EZ}^{\star}$:
\begin{eqnarray*}
\sum_{{\bf t} \preceq {\bf s} } \zeta_{EZ, {\rm length}({\bf t})} ({\bf t}) &=&\sum_{i=0}^{k}(-1)^{k-i}
\zeta_{EZ, i}^{\star}(s_{00}, s_{10}, \cdots, s_{i-1, 0})\cdot \\
&&
\zeta_{EZ, \ell+k-i+1}(s_{k\ell}, s_{k, \ell-1}, \cdots, s_{k0}, s_{k-1, 0}, \cdots, s_{i0}),\\
\sum_{{\bf t} \preceq {\bf s}' } (-1)^{|\theta|-{\rm length}({\bf t})} \zeta_{EZ, {\rm length}({\bf t})}^{\star} ({\bf t})
&=& \sum_{i=0}^{k}(-1)^{k-i}
\zeta_{EZ, i}^{\star}(s_{00}, s_{10}, \cdots, s_{i-1, 0})\cdot \\
&&\zeta_{EZ, \ell+k-i+1}(s_{k\ell}, s_{k, \ell-1}, \cdots, s_{k0}, s_{k-1, 0}, \cdots, s_{i0}).
\end{eqnarray*} 
\end{Corollary}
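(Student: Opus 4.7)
The plan is simply to equate two independently available representations of $\zeta_{\theta}(\pmb{s})$. On one side, identity \eqref{linearcombinationEZ} from \cite{NPY}, applied to the anti-hook shape $\theta=\mathrm{rib}(k|\ell)$ with the labelling \eqref{stableau}, expresses $\zeta_{\theta}(\pmb{s})$ as $\sum_{\mathbf{t}\preceq\mathbf{s}} \zeta_{EZ,\mathrm{length}(\mathbf{t})}(\mathbf{t})$. On the other side, Theorem \ref{Thexpression} expresses the same function as $\sum_{i=0}^{k}(-1)^{k-i}\zeta_{EZ,i}^{\star}(s_{00},\ldots,s_{i-1,0})\,\zeta_{EZ,\ell+k-i+1}(s_{k\ell},\ldots,s_{i0})$. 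Setting these two expressions equal gives the first functional relation. For the second relation, repeat the same argument using \eqref{linearcombinationEZstar} instead of \eqref{linearcombinationEZ}; the factor $(-1)^{|\theta|-\mathrm{length}(\mathbf{t})}$ and the reversed tableau $\mathbf{s}'$ come directly from that identity.

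To promote the equalities from the region $W_{\lambda}$ of absolute convergence to the whole space $\mathbb{C}^{k+\ell+1}$, I would invoke Lemma \ref{Lemmeroconti} together with the known meromorphic continuation of $\zeta_{EZ}$ and $\zeta_{EZ}^{\star}$: both sides of each proposed relation are meromorphic functions on $\mathbb{C}^{k+\ell+1}$ that agree on a nonempty open set, hence they agree everywhere they are defined. This is the same analytic-continuation principle used to assert Theorem \ref{Thexpression} on the whole space.

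Since the corollary is a formal consequence of Theorem \ref{Thexpression} and the cited identities from \cite{NPY}, there is no substantive obstacle in the deduction itself; all of the genuine content is absorbed into Theorem \ref{Thexpression}. The only real bookkeeping is to verify that the numbering conventions for $\pmb{s}$ in \eqref{stableau} line up consistently between the two sides — in particular, that the relation $\mathbf{t}\preceq\mathbf{s}$ is computed relative to the anti-hook tableau \eqref{stableau} on the left, and that $\mathbf{s}'$ in the second relation refers to the conjugate tableau as defined in \cite{NPY}. Once this compatibility is checked, the two identities in the corollary follow in one line each by substitution.
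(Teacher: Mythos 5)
Your proposal is correct and matches the paper's own (implicit) proof exactly: the corollary is obtained by equating the expression of $\zeta_{\theta}({\pmb s})$ from \eqref{linearcombinationEZ} (resp.\ \eqref{linearcombinationEZstar}) with the expression from Theorem \ref{Thexpression}, the extension to the whole space being handled by the same meromorphic-continuation argument via Lemma \ref{Lemmeroconti}. No further comment is needed.
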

\begin{Example}
When  $(k, \ell)=(1, 1)$ for
${\pmb s}=
\ytableausetup{boxsize=normal}  
\begin{ytableau}
 \none   & s_{11}\\
 s_{00}  & s_{10}
\end{ytableau}$\quad and
\begin{equation}\label{antihookSMZ}
 \zeta_{\theta}({\pmb s}) =
 \sum
 _{\substack{1\leq m_{00}\leq m_{10}\\
1\leq m_{11}<m_{10}}}
 {m_{00}^{-s_{00}}m_{10}^{-s_{10}}m_{11}^{-s_{11}}},
 \end{equation}
from Theorem \ref{Thexpression} we have
$$\zeta_{\theta}({ \pmb s})=-\zeta_{EZ, 3}(s_{11}, s_{10}, s_{00})+\zeta_{EZ, 1}^{\star}(s_{00})\zeta_{EZ, 2}(s_{11}, s_{10}).
$$ 
Using the harmonic product rule for $\zeta_{EZ}$, say
\begin{eqnarray*}
\zeta_{EZ, 1}^{\star}(s_{00})\zeta_{EZ, 2}(s_{11}, s_{10})&=&\zeta_{EZ, 1}(s_{00})\zeta_{EZ, 2}(s_{11}, s_{10})\\
&=&\zeta_{EZ, 3}(s_{00}, s_{11}, s_{10})+\zeta_{EZ, 3}(s_{11}, s_{00}, s_{10})+\zeta_{EZ, 3}(s_{11}, s_{10}, s_{00})\\
&&+\zeta_{EZ, 2}(s_{00}+s_{11}, s_{10})+\zeta_{EZ, 2}(s_{11}, s_{00}+s_{10}),
\end{eqnarray*}
we obtain
\begin{equation}\label{kl=11}
 \zeta_{\theta}({\pmb s}) =
\zeta_{EZ, 3}(s_{00}, s_{11}, s_{10})+\zeta_{EZ, 3}(s_{11}, s_{00}, s_{10})+\zeta_{EZ, 2}(s_{00}+s_{11}, s_{10})+\zeta_{EZ, 2}(s_{11}, s_{00}+s_{10}),
\end{equation}
which equals to $\sum_{{\bf t} \preceq {\bf s} } \zeta_{EZ} ({\bf t})$. 
This is the simplest case of Corollary \ref{functionalrelation}.
\end{Example}

Note that actually
we can obtain every functional relations among the Euler-Zagier multiple zeta functions from their harmonic product formulas (see Ikeda and Matsuoka \cite{IkedaMatsuoka}).
In this sense, Corollary \ref{functionalrelation} is not an essentially new fact.
However, 
it is still interesting to find a new way of finding such functional relations, as above.
\section{The second main theorem}\label{sectionresults2}
 
Our next theorem is to give an expression of $\zeta_{\theta}({\pmb s})$ in terms of zeta functions of root systems of Hurwitz type, \eqref{AH_def} and \eqref{Ahalfstarbullet_def}. 
 
To state our next theorem, we introduce several notations.  Put 
\[
  W_\lambda^{\circ} =
\left\{{\pmb s}=(s_{ij})\in T(\lambda,\mathbb{C})\,\left|\,
\begin{array}{l}
 \text{$\Re(s_{ij})> 1$ for all $(i,j)\in \lambda$} 
\end{array}
\right.
\right\}.
\]
And let
$
{\bf s}_h={\bf s}_h(x, y) =\underbrace{x, 0, \cdots, 0, y}_h
$. 
If $k<\ell$, we put
\begin{equation}\label{un0}
{\bf u}_n ={\bf u}_n(k, \ell) =\begin{cases}
{\bf s}_{k+\ell+2-n}(0, 0) & (1\leq n \leq k)\\
{\bf s}_{k+\ell+2-n}(s_{k, k+\ell+1-n}, 0) & (k < n\leq \ell)\\
{\bf s}_{k+\ell+2-n}(s_{k, k+\ell+1-n}, s_{n-\ell-1, 0}) & (\ell < n < k+\ell+1)\\
s_{k, 0} & (n = k+\ell+1).
\end{cases}
\end{equation}
If $k\geq \ell$, we put
\begin{equation}\label{un1}
{\bf u}_n ={\bf u}_n(k, \ell)=\begin{cases}
{\bf s}_{k+\ell+2-n}(0, 0) & (1\leq n \leq \ell)\\
{\bf s}_{k+\ell+2-n}(0, s_{n-\ell-1, 0} ) & (\ell < n\leq k)\\
{\bf s}_{k+\ell+2-n}(s_{k, k+\ell+1-n}, s_{n-\ell-1, 0}) & (k < n < k+\ell+1)\\
s_{k, 0} & (n = k+\ell+1).
\end{cases}
\end{equation}
Moreover, if $\nu<\mu$, we put
\begin{equation}\label{vn0}
{\bf v}_n ={\bf v}_n( \nu, \mu; k) =\begin{cases}
0^{\nu}, s_{k0}, 0^{\mu} & (n=0)\\
0^{\nu-n}, s_{kn}, 0^{n-1}, s_{k-n, 0}, 0^{\mu-n} & (1\leq n\leq \nu)\\
0^{\nu}, s_{k-n, 0}, 0^{\mu-n} & (\nu<n\leq \mu) \\
0^{\nu+\mu-n+1} & (\mu<n\leq \mu+\nu+1).
\end{cases}
\end{equation}
If $\nu\geq \mu$, we put
\begin{equation}\label{vn1}
{\bf v}_n ={\bf v}_n( \nu, \mu; k)=\begin{cases}
0^{\nu}, s_{k0}, 0^{\mu} & (n=0)\\
0^{\nu-n}, s_{kn}, 0^{n-1}, s_{k-n, 0}, 0^{\mu-n} & (1\leq n\leq \mu)\\
0^{\nu-n}, s_{kn}, 0^{\mu} & (\mu<n\leq \nu) \\
0^{\nu+\mu-n+1} & (\nu <n\leq \mu+\nu+1).
\end{cases}
\end{equation}
Here, $0^{\nu}$ means
$
\underbrace{0, \cdots, 0}_{ \nu}
$.

\begin{Theorem}\label{HurwitzTheorem}
Let $\theta={\rm rib}(k | \ell)$, 
${\pmb s}=(s_{ij})\in T(\theta,\mathbb{C})$,
\begin{align*}
{\frak s}_1 &={\frak s}_1(\mu; k)=({\bf s}_{k-(\mu+1)}(s_{10}, 0), {\bf s}_{k-(\mu+1)-1}(s_{20}, 0), \cdots, {\bf s}_{2}(s_{k-(\mu+1)-1, 0}, 0), s_{k-(\mu+1), 0}), \\
{\frak s}_2&={\frak s}_2(\nu; k, \ell)=
({\bf s}_{\ell-\nu-1}(s_{k, {\ell-1}},0), {\bf s}_{\ell-\nu-2}(s_{k, {\ell-2}},0)\cdots, 
{\bf s}_{2}(s_{k, {\nu+2}},0),  s_{k,\nu+1}),
\end{align*} 
${\frak u}={\frak u}(k, \ell)=({\bf u}_1, {\bf u}_2, \cdots, {\bf u}_{k+\ell+1})$, 
and 
${\frak v}={\frak v}(\nu, \mu; k)=({\bf v}_0, {\bf v}_1, \cdots, {\bf v}_{\mu+\nu+1})$, where ${\bf u}_n$ {\rm(}and ${\bf v}_n${\rm)} is defined by \eqref{un0} and \eqref{un1} {\rm(}\eqref{vn0} and \eqref{vn1}{\rm)}, respectively.    Then 
for ${\pmb s}\in W_{\theta}^{\circ}$, 
we have
\begin{eqnarray}
 \zeta_{\theta}({\pmb s})  = 
 \zeta_{k+\ell+1, {k+1}}^{\bullet}({\frak u}, A_{k+\ell+1})
+
\sum_{\mu=0}^{k-1}\sum_{\nu=0}^{\ell-1}(-1)^{\mu+\nu}
Z_{\mu,\nu}({\pmb s};(k,\ell)),
\label{thm2}
\end{eqnarray}
where
\begin{align}
Z_{\mu,\nu}({\pmb s};(k,\ell)) = &
\sum_{m_{00}, m_{k\ell}\geq 1}
m_{00}^{-s_{00}}m_{k\ell}^{-s_{k\ell}}\notag\\
& \cdot \zeta^{\bullet, H}_{k-(\mu+1), k-(\mu+1)}({\frak s}_1, m_{00}, A_{k-(\mu+1)})\notag\\
&
\cdot \zeta_{\mu+\nu+1, \nu}^{\bullet, H}
({\frak v}, m_{00}+m_{k\ell}, A_{\mu+\nu+1})
\zeta_{\ell-(\nu+1)}^H({\frak s}_2, m_{k\ell}, A_{\ell-(\nu+1)}).\label{Zmunu}
\end{align}
\label{Z_def}
\end{Theorem}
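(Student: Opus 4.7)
The plan is to evaluate the SSYT summation defining $\zeta_\theta$ by a direct change of variables and a careful decomposition. Introduce the natural difference variables: $a_0=m_{00}\geq 1$ and $a_i=m_{i0}-m_{i-1,0}\geq 0$ for $1\leq i\leq k$ along the (weakly increasing) row, together with $b_\ell=m_{k\ell}\geq 1$ and $b_j=m_{k,j-1}-m_{kj}\geq 1$ for $1\leq j\leq \ell$ along the (strictly increasing) column. The SSYT constraints become this positivity data together with the single corner coupling $\sum_{i=0}^{k}a_i=m_{k\ell}+\sum_{j=1}^{\ell}b_j$, which expresses the fact that the row and column chains both terminate at $m_{k0}$. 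This coupling is the central technical obstruction: without it, one would simply obtain an independent product of zeta-functions of type $A$.

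The strategy is to convert this data into the summation variables of a root system of type $A$ and rank $k+\ell+1$: the $k+1$ star-like variables encode the weakly-increasing row while the $\ell$ non-star variables encode the strictly-increasing column. This identification produces the principal sum $\zeta_{k+\ell+1,k+1}^{\bullet}({\frak u},A_{k+\ell+1})$, with the prime convention excluding the degenerate configurations arising from the change of variables. The identification is not exact, however, because the corner coupling forces certain partial sums to coincide in ways that the root-system sum allows but the original SSYT does not. The correction terms $Z_{\mu,\nu}$ repair this discrepancy via an inclusion-exclusion over $0\leq \mu\leq k-1$ and $0\leq \nu\leq \ell-1$. Each $Z_{\mu,\nu}$ factors into three Hurwitz-type zeta-functions of smaller root systems: a lower row piece $\zeta_{k-(\mu+1),k-(\mu+1)}^{\bullet,H}({\frak s}_1,m_{00})$ shifted by $m_{00}$, an upper column piece $\zeta_{\ell-(\nu+1)}^{H}({\frak s}_2,m_{k\ell})$ shifted by $m_{k\ell}$, and a middle bridging piece $\zeta_{\mu+\nu+1,\nu}^{\bullet,H}({\frak v},m_{00}+m_{k\ell})$ containing the corner and shifted by the combined anchor. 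The alternating signs $(-1)^{\mu+\nu}$ reflect the oscillating contributions of this inclusion-exclusion, which reconciles strict versus weak inequalities at the splitting boundaries.

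The main obstacle is the combinatorial bookkeeping of the exponent vectors ${\frak u}$, ${\frak v}$, ${\frak s}_1$, ${\frak s}_2$, which prescribe how each original tableau entry $s_{ij}$ is distributed among the positive roots of the relevant root system. The piecewise definitions of ${\bf u}_n$ (differing for $k<\ell$ versus $k\geq \ell$) and ${\bf v}_n$ (similarly for $\nu$ versus $\mu$) must match the factor distributions produced by the change of variables in each sub-region; since each positive root of $A_r$ corresponds to a unique partial sum $m_i+\cdots+m_{j-1}$, this assignment is rigid and will have to be verified case by case. The pictorial interpretation developed in Section \ref{pictorial} should provide the organizing framework that makes this otherwise formidable accounting manageable.
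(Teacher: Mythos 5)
Your proposal correctly isolates the corner coupling $m_{k0}=m_{00}+a_1+\cdots+a_k=m_{k\ell}+b_{\ell-1}+\cdots+b_0$ as the obstruction, but the mechanism you propose for resolving it is not the right one, and the missing idea is precisely the step the paper's proof is built on. You describe the principal term $\zeta^{\bullet}_{k+\ell+1,k+1}({\frak u},A_{k+\ell+1})$ as coming from an identification that ``allows'' more configurations than the SSYT sum, with the $Z_{\mu,\nu}$ repairing the resulting discrepancy. In fact the principal term is an \emph{exact} evaluation of the sub-sum over the region $m_{k0}\leq m_{00}+m_{k\ell}$: introducing the slack variable $q_1=m_{00}+m_{k\ell}-m_{k0}\geq 0$, both anchors are recovered as $m_{k\ell}=q_1+a_1+\cdots+a_k$ and $m_{00}=q_1+b_0+\cdots+b_{\ell-1}$, and the $k+\ell+1$ free variables $(a_k,\ldots,a_1,q_1,b_{\ell-1},\ldots,b_0)$ are in bijection with the tableaux satisfying that inequality. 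Note in particular that the $k+1$ variables permitted to vanish are $a_k,\ldots,a_1$ together with the slack $q_1$ --- not the row variables of your setup --- so the bullet structure cannot even be produced without first making this region split: your identification has no candidate for the $(k+1)$-st nonnegative summation variable.

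Correspondingly, $\sum_{\mu,\nu}(-1)^{\mu+\nu}Z_{\mu,\nu}$ is not a correction for overcounting (for real ${\pmb s}$ it is a positive quantity being \emph{added}); it is the complementary region $m_{k0}>m_{00}+m_{k\ell}$, parametrized by $h=m_{k0}-m_{00}-m_{k\ell}\geq 1$. On that region the residual inequalities $m_{k-1,0}\leq m_{k0}$ and $m_{k0}>m_{k1}$ still couple the two arms to the corner, and they are untangled by iterating the decomposition $\sum_{m>m'}=\sum_{m,m'\ \mathrm{free}}-\sum_{m\leq m'}$, $\mu$ times along the bottom row and $\nu$ times up the column; each application either terminates an arm or re-anchors the next entry at $m_{00}+m_{k\ell}+h$, which is exactly why the three Hurwitz factors carry the shifts $m_{00}$, $m_{k\ell}$ and $m_{00}+m_{k\ell}$ and why the sign is $(-1)^{\mu+\nu}$. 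Your sketch gestures at ``reconciling strict versus weak inequalities,'' but does not say which inequalities, in what order, or why the process terminates in this particular three-factor product, and without the initial dichotomy on the sign of $m_{k0}-m_{00}-m_{k\ell}$ there is no region on which to run it. Deferring the exponent bookkeeping for ${\frak u}$, ${\frak v}$, ${\frak s}_1$, ${\frak s}_2$ is reasonable --- that part really is mechanical --- but the bookkeeping is not the gap; the two-region split and the iterated decomposition are.
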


\begin{Remark}
The first term on the right-hand side of \eqref{thm2} can be written as a sum of
standard (that is, without bullet) zeta-functions of root systems of the form \eqref{A_def} (see Theorem \ref{ThI}).\\
\end{Remark}

\begin{Remark}
In Section \ref{pictorial} we will see that the right-hand side of \eqref{Zmunu}
can be rewritten as a sum involving certain modified Schur-type multiple zeta-functions
of hook type.
\end{Remark}

Theorem \ref{HurwitzTheorem} gives another new expression of 
$\zeta_{\theta}({\pmb s})$.    Therefore, similar to Corollary \ref{functionalrelation},
it produces new functional relations:

\begin{Corollary}
The right-hand sides of \eqref{linearcombinationEZ}, \eqref{linearcombinationEZstar}, \eqref{thm1}  and \eqref{thm2} 
coincide with each other. 
\end{Corollary}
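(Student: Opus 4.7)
The corollary is essentially an immediate consequence of the four prior identities, each of which asserts that its right-hand side equals $\zeta_\theta(\pmb{s})$. My plan is therefore to package this observation cleanly, paying attention to the domains on which each identity holds, and then conclude.

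First, I would identify the common domain of validity. The right-hand side of \eqref{thm2} is stated as an equality with $\zeta_\theta(\pmb{s})$ only for $\pmb{s}\in W_\theta^\circ$, which is the strongest convergence condition among the four identities. The identities \eqref{linearcombinationEZ} and \eqref{linearcombinationEZstar} are established in \cite{NPY} and, in view of Lemma \ref{Lemmeroconti}, extend meromorphically to all of $\mathbb{C}^{k+\ell+1}$; identity \eqref{thm1} of Theorem \ref{Thexpression} holds on the whole space $\mathbb{C}^{k+\ell+1}$ by the statement of that theorem. Thus the natural place to equate all four right-hand sides simultaneously is $W_\theta^\circ$.

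On $W_\theta^\circ$, each of the four expressions is, by its source, equal to the same Schur multiple zeta-function $\zeta_\theta(\pmb{s})$. By transitivity of equality, any two of them coincide on $W_\theta^\circ$. For those identities that are known to continue meromorphically to the whole space (namely \eqref{linearcombinationEZ}, \eqref{linearcombinationEZstar}, and \eqref{thm1}), the coincidence then propagates to $\mathbb{C}^{k+\ell+1}$ by the identity theorem for meromorphic functions, since $W_\theta^\circ$ is a non-empty open set.

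There is really no obstacle of substance here; the only point requiring care is the bookkeeping of domains, in particular checking that $W_\theta^\circ$ is contained in the convergence region where the series defining each $\zeta^{\bullet, H}_{\cdot, \cdot}$ and $\zeta^H_{\cdot}$ appearing in \eqref{thm2} converges absolutely, so that the right-hand side of \eqref{thm2} makes sense as an analytic function there. Once this is noted, the proof of the corollary is a one-line deduction: all four right-hand sides equal $\zeta_\theta(\pmb{s})$, hence they equal one another. Consequently I would present the corollary with essentially the remark ``This is immediate from \eqref{linearcombinationEZ}, \eqref{linearcombinationEZstar}, Theorem \ref{Thexpression}, and Theorem \ref{HurwitzTheorem}, all of which express $\zeta_\theta(\pmb{s})$ on $W_\theta^\circ$; the coincidence then extends meromorphically to $\mathbb{C}^{k+\ell+1}$ for the first three.''
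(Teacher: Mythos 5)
Your proposal is correct and matches the paper's (implicit) argument: the paper states this corollary without proof, treating it as immediate since all four right-hand sides equal $\zeta_\theta({\pmb s})$ on a common domain. Your additional care about domains and meromorphic continuation is sound but not something the paper spells out.
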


Here we mention the observation that the double series $Z_{\mu,\nu}({\pmb s};(k,\ell))$ 
can be regarded as an
analogue of ``Weyl group multiple Dirichlet series'' in the sense of 
Bump \cite{Bump}.
The origin of the theory of Weyl group multiple Dirichlet series goes back to the work
of Goldfeld and Hoffstein \cite{GoldfeldHoffstein}, in which the two-variable Dirichlet
series
$\sum_{d\geq 1}L(w,d)d^{1/2-2s}$
was studied, where $L(w,d)$ is essentially the Dirichlet $L$-function attached to
the quadratic character $\chi_d$.    Later this study was vastly generalized,
and the multiple series of the form
\begin{align}\label{MDseries}
\sum_{d_i\geq 1 (1\leq i \leq r)}\Phi(w_1,\ldots,w_r;d_1,\ldots,d_r)
d_1^{-s_1}\cdots d_r^{-s_r},
\end{align}
with some zeta-like quantity $\Phi(w_1,\ldots,w_r;d_1,\ldots,d_r)$
parametrized by $d_1,\ldots,d_r$, 
have been studied extensively (for example, see \cite{BBF}) under the name of
Weyl group multiple Dirichlet series.    In their study, representation-theoretic
viewpoints (such as actions of Weyl groups) are quite important.

In \cite[p.19]{Bump}, Bump raised the question of whether there are some undiscovered 
connections between the theory of Weyl group multiple Dirichlet series and the theory
of zeta-functions of root systems.
Now we find that $Z_{\mu,\nu}({\pmb s};(k,\ell))$ is of the form \eqref{MDseries}, with
$r=2$ and $\Phi(w_1,\ldots,w_r;d_1,\ldots,d_r)$ is
the product of three (modified) zeta-functions of root systems.    
Therefore we may say that Theorem \ref{HurwitzTheorem} shows the first link between
these two representation-theoretic multiple series, and thereby 
gives a partial answer to Bump's question.

\begin{Remark}
Lemma \ref{Lemmeroconti} asserts that $\zeta_{\theta}({\pmb s})$
can be continued meromorphically to the whole space ${\mathbb C}^{k+\ell+1}$.
We will see later (Remark \ref{continuation_of_bullet}) that
$\zeta_{k+\ell+1, {k+1}}^{\bullet}({\frak u}, A_{k+\ell+1})$ is also continued
meromorphically to the whole space.
Therefore \eqref{thm2} gives the meromorphic continuation of the sum
$\displaystyle{\sum_{\mu=0}^{k-1}\sum_{\nu=0}^{\ell-1}(-1)^{\mu+\nu}
Z_{\mu,\nu}({\pmb s};(k,\ell))}$ to the whole space ${ \mathbb C}^{k+\ell+1}$.
\end{Remark}

The Schur multiple zeta value has an iterated integral representation when it is of ribbon type (see \cite{KanekoYamamoto}, \cite{Yamamoto} and \cite{NPY}).
 Specifically, the integral expression for the Schur multiple zeta value of anti-hook type is written as follows.    When $\theta={\rm rib}(k | \ell)$ and  
  
\begin{equation}\label{atableau}
{\pmb \alpha}=
\ytableausetup{boxsize=normal}  
\begin{ytableau}
  \none & \none & \none&   \alpha_{k \ell}\\
  \none & \none & \none&   \vdots\\
  \none & \none & \none &   \alpha_{k1}\\
 \alpha_{00} & \alpha_{10}  & \cdots & \alpha_{k 0}
\end{ytableau}\quad ,
\end{equation}
we have
 \begin{eqnarray}
\label{for:intSMZantihook}
\zeta_{\theta}(\pmb \alpha)
&&=
\int_{\Delta({\pmb y})}
\prod_{\iota =0}^{\ell}
\left(
\frac{dy_{\alpha_{k, \ell+1}+\cdots + \alpha_{k, \ell-\iota+1}+1}}{1-y_{\alpha_{k, \ell+1}+\cdots + \alpha_{k, \ell-\iota+1}+1}}
\prod^{\alpha_{k, \ell+1}+\cdots + \alpha_{k, \ell-\iota}}_{j=\alpha_{k, \ell+1}+\cdots + \alpha_{k, \ell-\iota+1}+2}\frac{dy_j}{y_j}
\right)
\notag\\
&&\ \ \ \cdot
\prod^{k-1}_{\kappa=0}
\left(
\frac{dy_{\alpha_{k, \ell}+\cdots + \alpha_{k1} + \alpha_{k0} +\cdots +\alpha_{k-\kappa, 0}+1}}{1-y_{\alpha_{k, \ell}+\cdots + \alpha_{k1} + \alpha_{k0} +\cdots +\alpha_{k-\kappa, 0}+1}}
\prod^{\alpha_{k, \ell}+\cdots + \alpha_{k 1} + \alpha_{k 0}+\cdots + \alpha_{k-\kappa-1, 0}}_{j=\alpha_{k, \ell}+\cdots + \alpha_{k1} + \alpha_{k0}+\cdots +\alpha_{k-\kappa, 0}+2}\frac{dy_j}{y_j}
\right),\notag\\
\end{eqnarray}
 where we set $\alpha_{k, \ell+1}=0$ and 
 $$
 \Delta({\pmb y})
=\left\{\begin{array}{ll}
&(y_1,\ldots, y_{|{\pmb \alpha|}})\in[0,1]^{|{\pmb \alpha|}}\, \quad {\rm s.t.}\, 
\\
& y_1<y_2<\cdots <y_{\alpha_{k\ell}+\cdots \alpha_{k0}}> y_{\alpha_{k\ell}+\cdots \alpha_{k0}+1}, \\
& y_{\alpha_{k\ell}+\cdots \alpha_{k0}+1}<y_{\alpha_{k\ell}+\cdots \alpha_{k0}+2} < \cdots < y_{\alpha_{k\ell}+\cdots \alpha_{k0}+\alpha_{k-1,0}}
> y_{\alpha_{k\ell}+\cdots \alpha_{k0}+\alpha_{k-1,0}+1}, \\
& \cdots \\
& y_{\alpha_{k\ell}+\cdots +\alpha_{k0}+\alpha_{k-1, 0}+\cdots \alpha_{10}+1}<y_{\alpha_{k\ell}+\cdots +\alpha_{k0}+\alpha_{k-1, 0}+\cdots \alpha_{10}+2} < \cdots < y_{\alpha_{k\ell}+\cdots +\alpha_{k0}+\alpha_{k-1, 0}+\cdots \alpha_{00}}
\end{array}\right\},
$$
with $|{\pmb \alpha|}=\alpha_{00}+\alpha_{10}+\cdots + \alpha_{k0}+\alpha_{k1}+\cdots + \alpha_{k\ell}$.
Using the change of variables $y_i'=1-y_{|{\pmb \alpha}|+1-i}$ for $1\leq i \leq |{\pmb \alpha}|$, we have a {\it duality} (see also \cite{NPY}). 
This kind of duality also leads us to new relations.

\begin{Example}\label{Ex4-1}
When $(k, \ell)=(1, 1)$, Theorem \ref{HurwitzTheorem} implies 
$$\zeta_{\theta}({\pmb s})=\zeta^{\bullet}_{3,2}(0^3 | s_{11}, s_{00} | s_{10}, A_3)
+Z_{00}({\pmb s};(1,1)),
$$
and
\begin{align*}
Z_{00}({\pmb s};(1,1))&=\sum_{m_{00}, m_{11}\geq 1}m_{00}^{-s_{00}}m_{11}^{-s_{11}}\zeta^{\bullet, H}_{0,0}({\frak s}_1, m_{00}, A_0)\zeta^{\bullet, H}_{1, 0}({ \frak v}, m_{00}+m_{11}, A_1)\zeta^{H}_{0}({\frak s}_2, m_{11}, A_0)\\
&=
\sum_{m, m_{00}, m_{11}\geq 1}m_{00}^{-s_{00}}m_{11}^{-s_{11}}(m_{00}+m_{11}+m)^{-s_{10}}=\zeta_3(s_{00}, s_{11}, 0 | 0^2 | s_{10}, A_3).
\end{align*} 
In the above formulas, we replace the commas between ${\bf s}_i(\cdot, \cdot)$ and ${\bf s}_{i-1}(\cdot, \cdot)$ by (vertical) bars to guide the eye.
Therefore
\begin{align}\label{example4-1}
\zeta_{\theta}({\pmb s})&=\zeta^{\bullet}_{3,2}(0^3 | s_{11}, s_{00} | s_{10}, A_3)
+\zeta_3(s_{00}, s_{11}, 0 | 0^2 | s_{10}, A_3)\\
&=\zeta_3(0^3 | s_{11}, s_{00} | s_{10}, A_3)+\zeta_2(s_{11}, 0 | s_{00}+s_{10}, A_2)
+\zeta_2(s_{11}, s_{00} | s_{10}, A_2)\notag\\
&\qquad +\zeta_3(s_{00}, s_{11}, 0 | 0^2 | s_{10}, A_3),\notag
\end{align}
where the second equality is by Theorem \ref{ThII1} and Theorem \ref{ThI}.
Combining with \eqref{kl=11}, we obtain
\begin{multline}\label{korekore}
\zeta_{EZ, 3}(s_{00}, s_{11}, s_{10})+\zeta_{EZ, 3}(s_{11}, s_{00}, s_{10})+\zeta_{EZ, 2}(s_{00}+s_{11}, s_{10})+\zeta_{EZ, 2}(s_{11}, s_{00}+s_{10})\\
=\zeta_3(0^3 | s_{11}, s_{00} | s_{10}, A_3)+\zeta_2(s_{11}, 0 | s_{00}+s_{10}, A_2)
+\zeta_2(s_{11}, s_{00} | s_{10}, A_2)
+\zeta_3(s_{00}, s_{11}, 0 | 0^2 | s_{10}, A_3).
\end{multline}
Writing each term on the left-hand side of \eqref{korekore} in terms of zeta-functions of root systems, the left-hand side is 
\begin{equation*}
\begin{array}{ll}
=&
\zeta_3(s_{11}, 0, 0 | s_{00}, 0 | s_{10}, A_3)
+\zeta_2(s_{00}+s_{11}, 0 | s_{10}, A_2)\vspace{3mm}\\
&+\zeta_2(0, s_{11} | s_{00}+s_{10}, A_2)
+\zeta_3(s_{00}, 0, 0 | s_{11}, 0 | s_{10}, A_3). 
\end{array}
\end{equation*}
Therefore, noting 
$\zeta_2(0, s_{11} | s_{00}+s_{10}, A_2)=\zeta_2(s_{11}, 0 | s_{00}+s_{10}, A_2)$, we can rewrite \eqref{korekore} as
$$
\begin{array}{ll}
&\zeta_3(s_{11}, 0, 0 | s_{00}, 0 | s_{10}, A_3)
+\zeta_2(s_{00}+s_{11}, 0 | s_{10}, A_2)
+\zeta_3(s_{00}, 0, 0 | s_{11}, 0 | s_{10}, A_3)\vspace{3mm}\\
& = \zeta_3(0^3 | s_{11}, s_{00} | s_{10}, A_3)
+\zeta_2(s_{11}, s_{00} | s_{10}, A_2)
+\zeta_3(s_{00}, s_{11}, 0 | 0^2 | s_{10}, A_3)
\end{array}
$$
which is a functional relation
among zeta-functions of root systems.

Iterated integral expression shows the duality for Schur multiple zeta values as mentioned in Section \ref{sectionresults}.   For example, applying this formula for
${\pmb s}=
\ytableausetup{boxsize=normal}  
\begin{ytableau}
  \none &   1\\
 2 & 2
\end{ytableau}$, we have
$$
\zeta_{\theta}
\left(\ytableausetup{boxsize=normal}  
\begin{ytableau}
  \none &  1\\
 2 & 2
\end{ytableau}\right)
=\zeta_{(2)}
\left(\ytableausetup{boxsize=normal}  
\begin{ytableau}
 3 &  2
\end{ytableau}\right).
$$
Since$$
\zeta_{(2)}
\left(\ytableausetup{boxsize=normal}  
\begin{ytableau}
 3 &  2
\end{ytableau}\right)
=\zeta^{ \star}_{EZ, 2}(3, 2)=\zeta_{EZ, 2}(3, 2)+\zeta(5).
$$
These relations with \eqref{example4-1} lead to
\begin{align*}
&\zeta_3(0^3 | 1, 2 | 2, A_3)+\zeta_2(1, 0 | 4, A_2)
+\zeta_2(1, 2 | 2, A_2)
+\zeta_3(2, 1, 0 | 0^2 | 2, A_3)\\
&=\zeta_2(3, 0 | 2, A_2)+\zeta_1(5, A_1).
\end{align*}

\end{Example}

\section{Proof of Theorem \ref{Thexpression}}\label{proof_FMT}

Now we start the proofs.
In this and the next section we describe our proofs of main theorems, which are
quite computational.    However it is possible to give a pictorial interpretation
of our argument in terms of Young tableaux.
We will discuss such an interpretation in Section \ref{pictorial}.

The present section is devoted to the proof of Theorm \ref{Thexpression}.
From the definition of semi-standard Young tableaux, $1\leq m_{00}\leq m_{10}\leq \cdots \leq m_{k0}$ and
$1\leq m_{k\ell}< m_{k, \ell-1}< \cdots < m_{k0}$. 
So, setting $m_{10}=m_{00}+a_{1}$ ($a_{1}\geq 0$), 
$m_{20}=m_{00}+a_{1}+a_{2}$ ($a_{1}, a_{2}\geq 0$),
$\cdots$,
\begin{equation}\label{k0ina}
m_{k0}=m_{00}+a_{1}+a_{2}+\cdots +a_{k} \quad (a_{i}\geq 0),
\end{equation}
and  $m_{k, \ell-1}=m_{k \ell}+b_{\ell-1}$ ($b_{\ell-1}\geq 1$), 
$m_{k, \ell-2}=m_{k \ell}+b_{\ell-1}+b_{\ell-2}$ ($b_{\ell-1}, b_{\ell-2}\geq 1$),
$\cdots$,
$m_{k 1}=m_{k \ell}+b_{\ell-1}+b_{\ell-2}+\cdots +b_{1}$ ($b_{j}\geq 1$), 
\begin{equation}\label{k0inb}
m_{k 0}=m_{k \ell}+b_{\ell-1}+b_{\ell-2}+\cdots +b_{1}+b_0\quad (b_{j}\geq 1),
\end{equation}
then we can write \eqref{hookSMZ} as 
\begin{eqnarray}
&&\zeta_{\theta}({\pmb s})
= \sum_{\substack{m_{00}, m_{k\ell} \geq 1\\
a_i\geq 0 (1\leq i \leq k-1)\\
b_j\geq 1(0\leq j\leq \ell-1)\\
1\leq m_{k-1, 0}\leq m_{k0}}}m_{00}^{-s_{00}}(m_{00}+a_1)^{-s_{10}}\cdots (m_{00}+a_1+\cdots + a_{k-1})^{-s_{k-1, 0}}\notag\\
&&\quad\cdot (m_{k\ell}+b_{\ell-1}\cdots + b_0)^{-s_{k0}}
(m_{k\ell}+b_{\ell-1}\cdots + b_1)^{-s_{k1}}\cdots 
 (m_{k \ell}+b_{\ell-1})^{-s_{k, \ell-1}}m_{k \ell}^{-s_{k \ell}}\label{zetaaball}
\end{eqnarray}
for ${\pmb s}\in W_{\theta}$. 
The summation is over $m_{00}, m_{k\ell}, a_i \;(1\leq i \leq k-1)$ and $b_j\; (0\leq j \leq \ell-1)$ which satisfies 
$m_{k-1, 0}=m_{00}+a_1+\cdots + a_{k-1}\leq m_{k\ell}+b_{\ell-1}\cdots + b_0=m_{k0}$.
Temporarily, we assume ${\pmb s}\in W_{\theta}^{\circ}$.
We decompose this sum according to
\begin{align}\label{basicdecomp}
\sum_{1\leq m_{k-1, 0}\leq m_{k0}}=\sum_{m_{k-1, 0}, m_{k0}\geq 1}-\sum_{m_{k-1, 0}>m_{k0}\geq 1},
\end{align}
where in the first sum on the right-hand side no inequality between $m_{k-1,0}$ and
$m_{k0}$ is required. 
Note that it is valid since ${\pmb s}\in W_{\theta}^{\circ}$.
The condition in the second sum $m_{k-1, 0}>m_{k0}$ can be rewritten as 
$m_{k-1, 0}=m_{k0}+p_1$ for $p_1\geq 1$, so \eqref{zetaaball} is,
\begin{eqnarray}
\lefteqn{ \zeta_{\theta}({\pmb s}) }\notag\\
&& =\sum_
{\substack{m_{00}, m_{k\ell} \geq 1\\
a_i\geq 0 (1\leq i \leq k-1)\\
b_j\geq 1(0\leq j\leq \ell-1)\\
m_{k-1, 0}, m_{k0}\geq 1}}m_{00}^{-s_{00}}(m_{00}+a_1)^{-s_{10}}\cdots (m_{00}+a_1+\cdots + a_{k-1})^{-s_{k-1, 0}}
 (m_{k\ell}+b_{\ell-1}\cdots + b_0)^{-s_{k0}}\notag\\
&& \cdot (m_{k\ell}+b_{\ell-1}\cdots + b_1)^{-s_{k1}}\cdots 
 (m_{k \ell}+b_{\ell-1})^{-s_{k, \ell-1}}m_{k \ell}^{-s_{k \ell}}\notag\\
&&
- \sum_
{\substack{m_{00}, m_{k\ell} \geq 1\\
a_i\geq 0 (1\leq i \leq k-2)\\
p_1, b_j\geq 1(0\leq j\leq \ell-1)\\
1\leq m_{k-2, 0}\leq m_{k-1, 0}}}m_{00}^{-s_{00}}(m_{00}+a_1)^{-s_{10}}\cdots 
(m_{00}+a_1+\cdots+a_{k-2})^{-s_{k-2,0}}\notag\\
&&\cdot(m_{k\ell}+b_{\ell-1}\cdots + b_0+p_1)^{-s_{k-1, 0}}
 (m_{k\ell}+b_{\ell-1}\cdots + b_0)^{-s_{k0}} (m_{k\ell}+b_{\ell-1}\cdots + b_1)^{-s_{k1}}\cdots
 \notag\\
&&\cdot (m_{k \ell}+b_{\ell-1})^{-s_{k, \ell-1}}m_{k \ell}^{-s_{k \ell}} .\label{pic5}
\end{eqnarray}
We apply the same argument repeatedly for the summation $\sum_{m_{k-(i+1), 0}\leq m_{k-i, 0}}$ ($1\leq i \leq k-1$), then we obtain
\begin{eqnarray}
 \zeta_{\theta}({\pmb s}) &=&
 \sum_{i=0}^{k}(-1)^{k-i}
 \sum_{\substack{m_{00}, m_{k\ell}\geq 1\\
 a_t\geq 0 (1\leq t \leq i-1)\\
 p_u \geq 1 (1\leq u \leq k-i)\\
 b_j\geq 1 (0\leq j \leq \ell-1)}}m_{00}^{-s_{00}}(m_{00}+a_1)^{-s_{10}}\cdots 
 (m_{00}+a_1+\cdots + a_{i-1})^{-s_{i-1, 0}}\notag\\
 &&
 \cdot(m_{k\ell}+b_{\ell-1}\cdots + b_0+p_1+p_2+\cdots + p_{k-i})^{-s_{i 0}}\notag\\
&& \cdot(m_{k\ell}+b_{\ell-1}\cdots + b_0+p_1+p_2+\cdots + p_{k-(i+1)})^{-s_{i+1, 0}}
\cdots\notag\\
&& \cdot(m_{k\ell}+b_{\ell-1}\cdots + b_0+p_1)^{-s_{k-1, 0}}
 (m_{k\ell}+b_{\ell-1}\cdots + b_0)^{-s_{k0}}\notag\\
&& \cdot (m_{k\ell}+b_{\ell-1}\cdots + b_1)^{-s_{k1}}\cdots 
 (m_{k \ell}+b_{\ell-1})^{-s_{k, \ell-1}}m_{k \ell}^{-s_{k \ell}}.\label{intermsofEZ}
\end{eqnarray} 
On the inner sum, each term can be divided into two factors: the factor involving only
$m_{00}, a_1, \cdots, a_{i-1}$, and the remaining factor involvong only
$m_{k\ell}, b_0, \cdots, b_{\ell-1}, p_1, \cdots, p_{k-i}$.
The summation of the first factor gives the multiple zeta-star function 
$\zeta_{EZ, i}^{\star}(s_{00}, s_{10}, \cdots, s_{i-1, 0})$, and 
the summation over the second factor gives the multiple zeta function 
$\zeta_{EZ, \ell+k-i+1}(s_{k\ell}, s_{k, \ell-1}, \cdots, s_{k0}, s_{k-1, 0}, \cdots, s_{i0})$. Therefore we obtain \eqref{thm1} for ${\pmb s}\in W_{\theta}^{\circ}$.
Lemma \ref{Lemmeroconti} then leads to Theorem \ref{Thexpression}.

\section{Proof of Theorem \ref{HurwitzTheorem}}\label{proof_SMT}
This section is devoted to the proof of Theorem \ref{HurwitzTheorem}. We continue to use the notations of \eqref{k0ina} and \eqref{k0inb} in the previous section.
Now the summation on the right-hand side of \eqref{hookSMZ} can be divided into two cases; (i) $m_{k 0}\leq m_{00}+m_{k \ell}$ and (ii) $m_{k 0}>m_{00}+m_{k \ell}$. Let us write
\begin{equation}\label{i+ii}
 \zeta_{\theta}({\pmb s}) =
{\sum}_{\rm (i)} +{\sum}_{\rm (ii)}\quad\quad({\pmb s}\in W_{\theta}), \end{equation}
where the first and the second sums correspond to the conditions (i) and (ii),
respectively.

In the following two subsections, we will evaluate $\sum_{\rm (i)}$ and $\sum_{\rm (ii)}$, 
respectively (Theorems \ref{ThII1} and \ref{ThII2}).   From these two theorems,
our second main theorem (Theorem \ref{HurwitzTheorem}) follows immediately.
%
\subsection{Evaluation of ${\sum}_{\rm (i)}$}
Consider the case $m_{k 0}\leq m_{00}+m_{k \ell}$.
From the condition, we can say $m_{00}+m_{k \ell}=m_{k 0}+q_1$ ($q_1\geq 0$).
By \eqref{k0ina}, 
we have
$m_{00}+m_{k \ell}=m_{00}+a_{1}+a_{2}+\cdots + a_k+q_1$, which leads to
$m_{k \ell}=q_1+a_{1}+a_{2}+\cdots + a_k$. Note that the case 
\begin{equation}\label{allarezero}
q_1=a_1=\cdots =a_k=0 
\end{equation}
does not occur.
On the other hand, 
by \eqref{k0inb},
we have
$m_{00}+m_{k \ell}=m_{k \ell}+b_{\ell-1}+b_{\ell-2}+\cdots +b_{0}+q_1$, which leads to
$m_{00}=b_{0}+\cdots +b_{\ell-2}+b_{\ell-1}+q_1$.
Then for ${\pmb s}\in W_{\theta}$ we obtain
$$
\begin{array}{ll}
{\displaystyle{\sum}}_{\rm (i)}&=
 \displaystyle{
\sum_{M\in {\rm{SSYT}}( \theta) {\rm and} {\rm (i)}}
{m_{00}^{-s_{00}}m_{10}^{-s_{10}}\cdots m_{k 0}^{-s_{k 0}}m_{k 1}^{-s_{k1}}\cdots m_{k \ell}^{-s_{k \ell}}}}\vspace{3mm}\\
&=
\displaystyle{
\sum_{\substack{a_i\geq 0 (1\leq i \leq k)\\ 
b_j\geq 1 (0\leq j\leq \ell-1)\\
 q_1\geq 0}}}^{\!\!\!\!\!\!\!\!\!\!\!\!\!\!\prime}
\;\;(b_{0}+\cdots +b_{\ell-2}+b_{\ell-1}+q_1)^{-s_{00}}
(b_{0}+\cdots +b_{\ell-2}+b_{\ell-1}+q_1+a_{1})^{-s_{10}}\\
& \hspace{2.8cm}
\cdot(b_{0}+\cdots +b_{\ell-2}+b_{\ell-1}+q_1+a_{1}+a_{2})^{-s_{20}}\cdots\\
& \hspace{2.8cm}
\cdot(b_{0}+\cdots +b_{\ell-2}+b_{\ell-1}+q_1+a_{1}+a_{2}+\cdots+a_k)^{-s_{k0}}\\
& \hspace{2.8cm}
\cdot(b_{1}+\cdots +b_{\ell-2}+b_{\ell-1}+q_1+a_{1}+a_{2}+\cdots+a_k)^{-s_{k 1}}\\
& \hspace{2.8cm}
\cdot(b_{2}+\cdots +b_{\ell-2}+b_{\ell-1}+q_1+a_{1}+a_{2}+\cdots+a_k)^{-s_{k 2}}\\
& \hspace{2.8cm}\cdots \\
& \hspace{2.8cm}
\cdot(b_{\ell-1}+q_1+a_{1}+a_{2}+\cdots+a_k)^{-s_{k, \ell-1}}
(q_1+a_{1}+a_{2}+\cdots+a_k)^{-s_{k \ell}},
\end{array}$$
where the prime means that the sum omits the case of \eqref{allarezero}.
 Changing the order of terms, then for ${\pmb s\in W_{\theta}}$
 \begin{equation}\label{i-part}
\begin{array}{ll}
\displaystyle{{\sum}_{\rm (i)}}&=
 \displaystyle{
\sum_{M\in {\rm{SSYT}}( \theta) {\rm and} {\rm (i)}}
{m_{00}^{-s_{00}}m_{10}^{-s_{10}}\cdots m_{k 0}^{-s_{k 0}}m_{k 1}^{-s_{k1}}\cdots m_{k \ell}^{-s_{k \ell}}}}\vspace{3mm}\\
&=
\displaystyle{
\sum_{\substack{a_i\geq 0 (1\leq i\leq k)\\ b_j\geq 1 (0\leq j\leq \ell-1)\\
q_1\geq 0}}}^{\!\!\!\!\!\!\!\!\!\!\!\!\!\!\prime}
\;\;(a_k+\cdots+a_{2}+a_{1}+q_1)^{-s_{k \ell}}
(a_k+\cdots+a_{2}+a_{1}+q_1+b_{\ell-1})^{-s_{k, \ell-1}}
\\
& \hspace{2.8cm}
\cdot(a_k+\cdots+a_{2}+a_{1}+q_1+b_{\ell-1}+b_{\ell-2})^{-s_{k, \ell-2}}\\
& \hspace{2.8cm}\cdots \\
& \hspace{2.8cm}
\cdot(a_k+\cdots+a_{2}+a_{1}+q_1+b_{\ell-1}+b_{\ell-2}+\cdots +b_{1})^{-s_{k 1}}\\
& \hspace{2.8cm}
\cdot(a_k+\cdots+a_{2}+a_{1}+q_1+b_{\ell-1}+b_{\ell-2}+\cdots +b_{0})^{-s_{k0}}\\
& \hspace{2.8cm}
\cdot(a_{k-1}+\cdots+a_{2}+a_{1}+q_1+b_{\ell-1}+b_{\ell-2}+\cdots +b_{0})^{-s_{k-1,0}}\\
& \hspace{2.8cm}\cdots \\
& \hspace{2.8cm}
\cdot(a_{2}+a_{1}+q_1+b_{\ell-1}+b_{\ell-2}+\cdots +b_{0})^{-s_{20}}\\
& \hspace{2.8cm}
\cdot
(a_{1}+q_1+b_{\ell-1}+b_{\ell-2}+\cdots+b_{0} )^{-s_{10}}\\
& \hspace{2.8cm}
\cdot(q_1+b_{\ell-1}+b_{\ell-2}+\cdots +b_{0})^{-s_{00}}.
\end{array}
\end{equation}
This right-hand side can be written by using the notation given by \eqref{Ahalfstar_def}
with $r=k+\ell+1$, $d=k+1$.    The correspondence of the parameters is
\begin{align*}
(m_1,\ldots,m_k,m_{k+1}(=m_d),m_{k+2},\ldots,m_{k+\ell+1})
\leftrightarrow (a_k,\ldots,a_1,q_1,b_{\ell-1},\ldots,b_0).
\end{align*}

\noindent Then using the notation of \eqref{Ahalfstar_def}, \eqref{un0} and \eqref{un1}, we obtain
\begin{Theorem}\label{ThII1}
For ${\pmb s}\in W_{\theta}$,  
\begin{equation}\label{combimultizeta}
{\sum}_{\rm (i)}=
\zeta_{k+\ell+1, {k+1}}^{\bullet}({\bf u}_1, {\bf u}_2, \cdots, {\bf u}_{k+\ell+1}, A_{k+\ell+1}).
\end{equation}
\end{Theorem}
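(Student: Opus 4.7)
The starting point is the identity \eqref{i-part}, which has already been derived by unfolding \eqref{k0ina}, \eqref{k0inb} and introducing the slack variable $q_1\ge 0$ through $m_{00}+m_{k\ell}=m_{k0}+q_1$. Its right-hand side is an iterated sum over $a_1,\ldots,a_k\ge 0$, $q_1\ge 0$, $b_0,\ldots,b_{\ell-1}\ge 1$, with the prime excluding the single tuple $(a_1,\ldots,a_k,q_1)=(0,\ldots,0)$ (the only tuple that would force $m_{k\ell}=0$). The plan is to identify this expression with the definition \eqref{Ahalfstar_def} of $\zeta^{\bullet}_{k+\ell+1,\,k+1}(\,\cdot\,,A_{k+\ell+1})$ under the parameter correspondence
\[
(m_1,m_2,\ldots,m_{k+\ell+1})\;\longleftrightarrow\;(a_k,a_{k-1},\ldots,a_1,q_1,b_{\ell-1},b_{\ell-2},\ldots,b_0).
\]

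First I would check the summation ranges: under this substitution, $m_1,\ldots,m_{k+1}$ run over nonnegative integers and $m_{k+2},\ldots,m_{k+\ell+1}$ run over positive integers, which is exactly the pattern of \eqref{Ahalfstar_def} with $r=k+\ell+1$ and $d=k+1$. Next I would identify the exponents by reading off \eqref{i-part} factor by factor: the $\ell+1$ factors forming the ``first arm'' produce $s(1,k+2+t)=s_{k,\ell-t}$ for $t=0,1,\ldots,\ell$, and the $k$ factors forming the ``second arm'' produce $s(u+1,k+\ell+2)=s_{k-u,0}$ for $u=1,2,\ldots,k$; all remaining $s(i,j)$ are $0$.

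The third and main step is the combinatorial check that this nonzero pattern, once organized by length $n=j-i$ and position $i$ within ${\bf u}_n=\bigl(s(i,i+n)\bigr)_{i=1}^{k+\ell+2-n}$, coincides with the explicit formulas \eqref{un0} and \eqref{un1}. For the case $k<\ell$ I would split into four subranges: (a) $1\le n\le k$, where neither arm contributes the right length and ${\bf u}_n$ is all zero; (b) $k<n\le\ell$, where only the first entry is nonzero, taking the value $s_{k,k+\ell+1-n}$ via the first arm with $t=n-k-1$; (c) $\ell<n<k+\ell+1$, where the first entry is $s_{k,k+\ell+1-n}$ (first arm) and the last entry is $s_{n-\ell-1,0}$ (second arm, $u=k+\ell+1-n$); (d) $n=k+\ell+1$, where both arms meet at the single entry $s_{k,0}$. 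This matches \eqref{un0}; the verification for $k\ge\ell$ is completely analogous and matches \eqref{un1}. Finally, for the prime condition one observes that among pairs $(i,j)$ with $1\le i<j\le d+1=k+2$ the only nonzero exponent is $s(1,k+2)=s_{k\ell}$, whose base $m_1+\cdots+m_{k+1}$ vanishes exactly when $(a_k,\ldots,a_1,q_1)=(0,\ldots,0)$, so the two prime conditions exclude the same single summand.

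The main obstacle is the bookkeeping of the third step: tracking how the first-arm exponents (indexed by $t$) and the second-arm exponents (indexed by $u$) each land in the correct length-$n$ block ${\bf u}_n$ and the correct position within it, and confirming that the split between \eqref{un0} and \eqref{un1} is dictated precisely by whether the ``first arm at position $1$'' and the ``second arm at the last position'' overlap with each other or with the all-zero initial block.
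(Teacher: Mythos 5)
Your proposal is correct and follows essentially the same route as the paper's proof: starting from \eqref{i-part}, using the correspondence $(m_1,\ldots,m_{k+\ell+1})\leftrightarrow(a_k,\ldots,a_1,q_1,b_{\ell-1},\ldots,b_0)$, and matching the nonzero exponents, organized by length $n$, against \eqref{un0} and \eqref{un1}. Your treatment is in fact slightly more explicit than the paper's (which only sketches the length-count and omits the check that the primed exclusions agree), so there is nothing to fix.
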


\begin{proof}
Consider the case $k<\ell$.
On the right-hand side of \eqref{i-part}, there is no term of length $\leq k$.
Therefore all components of ${\bf u}_n$ are zero for $1\leq n\leq k$.     When $k<n\leq \ell$, 
just one term of length $n$ appears, which is
$$(a_k+\cdots+a_1+q_1+b_{\ell-1}+\cdots+b_{k+\ell+1-n})^{-s_{k,k+\ell+1-n}},$$
and this corresponds to the fact that, for $k<n\leq \ell$, ${\bf u}_n$ has just one non-zero
component $s_{k,k+\ell+1-n}$.    In view of the rule \eqref{ordervector}, this component
is the left-most component of ${\bf u}_n$.
When $\ell<n< k+\ell+1$, there are two terms of length $n$, hence ${\bf u}_n$ has two non-zero
components.
Lastly ${\bf u}_n$ for $n=k+\ell+1$ corresponds to the term
$$(a_k+\cdots+a_{2}+a_{1}+q_1+b_{\ell-1}+b_{\ell-2}+\cdots +b_{0})^{-s_{k0}}.$$
The case $k\geq\ell$ is similar.
\end{proof}

It is to be noted that
the equation \eqref{combimultizeta} can also be written in terms of only zeta-functions of root systems $\zeta_r(\underline{\bf s}, A_r)$. 
Put $a_0=q_1$. Define
$I_0^{[0,k]}=\phi$, and
$$I_m^{[0,k]}=\{ \{i_1, \cdots, i_m\}\in {\mathbb N}^m | 
0\leq i_1<\cdots <i_m\leq k\}\qquad(1\leq m\leq k).$$
For $I\in I_m^{[0,k]}$, put $J_I=\{0, \cdots, k\}\backslash I$.
We divide the right-hand side of \eqref{i-part} as
$$
\sum_{m=0}^{k}\sum_{I\in I_m^{[0, k]}}S_I,
$$
where $S_I$ is the sum of all terms satisfying $a_i=0$ for all $i\in I$ and 
$a_j\geq 1$ for all $j\in J_I$.

We will show that each $S_I$ is a zeta-function of a root system.
For each $I$, write 
$$J_I=\{\alpha_0, \alpha_1, \cdots, \alpha_{k-m}\} \qquad
(\alpha_0<\alpha_1<\cdots < \alpha_{k-m}),$$
and let\\
$$
z_{n}^I =z_{n}^I(k, m)=
\left\{
\begin{array}{ll}
s_{00}+s_{10}+\cdots+s_{\alpha_0-1,0} & (n=0),\\
s_{\alpha_{n-1},0}+s_{\alpha_{n-1}+1, 0}+\cdots + s_{\alpha_{n}-1, 0} &  
(1\leq n \leq k-m),\\
s_{\alpha_{n-1},0}+s_{\alpha_{n-1}+1, 0}+\cdots + s_{k, 0} & (n=k-m+1).
\end{array}\right.
$$
Here, if $\alpha_0=0$ then we understand $z_0^I=0$.

If $k-m<\ell$, we put
\begin{equation}\label{un2}
{\bf u}_n^I={\bf u}_n^I(k, \ell, m)=\begin{cases}
{\bf s}_{k-m+\ell+2-n}(0, 0) & (1\leq n \leq k-m)\\
{\bf s}_{k-m+\ell+2-n}(s_{k, \;k-m+\ell+1-n}, 0) & (k-m < n<  \ell)\\
{\bf s}_{k-m+\ell+2-n}(s_{k, \;k-m+\ell+1-n}, z_{n-\ell}^I) & (\ell \leq n < k-m+\ell+1)\\
z_{n-\ell}^I & (n = k-m+\ell+1).
\end{cases}
\end{equation}
If $k-m\geq \ell$, we put
\begin{equation}\label{un3}
{\bf u}_n^I={\bf u}_n^I(k, \ell, m)=\begin{cases}
{\bf s}_{k-m+\ell+2-n}(0, 0) & (1\leq n < \ell)\\
{\bf s}_{k-m+\ell+2-n}(0, z_{n-\ell}^I) & (\ell \leq n\leq k-m)\\
{\bf s}_{k-m+\ell+2-n}(s_{k, \;k-m+\ell+1-n}, z_{n-\ell}^I) & (k-m < n < k-m+\ell+1)\\
z_{n-\ell}^I & (n = k-m+\ell+1).
\end{cases}
\end{equation}
Note that ${\bf u}_n^I={\bf u}_n$ when $m=0$.
Using this notation, we obtain
\begin{Theorem}\label{ThI}
We have
\begin{equation}\label{S_I=rootzeta}
S_I=\zeta_{k-m+\ell+1}({\bf u}_1^I, {\bf u}_2^I, \cdots, {\bf u}_{k-m+\ell+1}^I, A_{k-m+\ell+1}),
\end{equation}
and therefore
\begin{equation}\label{sum1}
{\sum}_{\rm (i)}=\sum_{m=0}^{k}\sum_{I\in I_m^{[0, k]}}\zeta_{k-m+\ell+1}({\bf u}_1^I, {\bf u}_2^I, \cdots, {\bf u}_{k-m+\ell+1}^I, A_{k-m+\ell+1}).
\end{equation}
\end{Theorem}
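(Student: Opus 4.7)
The plan is to start from the expression \eqref{i-part} already established for ${\sum}_{\rm (i)}$ and decompose it according to which of the $k+1$ variables $q_1 = a_0, a_1, \ldots, a_k$ are zero and which are positive. For each $I = \{i_1 < \cdots < i_m\} \in I_m^{[0,k]}$, let $S_I$ denote the portion of the right-hand side of \eqref{i-part} in which $a_i = 0$ for $i \in I$ while $a_j \geq 1$ for $j \in J_I = \{\alpha_0 < \cdots < \alpha_{k-m}\}$, with the convention $a_0 := q_1$. Since the prime in \eqref{i-part} excludes only the configuration $q_1 = a_1 = \cdots = a_k = 0$, which would correspond to $I = \{0, 1, \ldots, k\}$ with $m = k+1$, we obtain
$$
{\sum}_{\rm (i)} = \sum_{m=0}^{k}\sum_{I\in I_m^{[0,k]}} S_I,
$$
so that \eqref{sum1} is an immediate consequence of \eqref{S_I=rootzeta}.

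For the identification \eqref{S_I=rootzeta}, the key observation is that after the substitution $a_i = 0$ for $i \in I$, the consecutive factors of \eqref{i-part} with exponents $s_{\alpha_{n-1}, 0}, s_{\alpha_{n-1}+1, 0}, \ldots, s_{\alpha_n - 1, 0}$ all share a common base and therefore collapse to a single factor with exponent $z_n^I$. Next, relabel the positive summation variables as $m_1, \ldots, m_r$ with $r = k-m+\ell+1$ by setting
$$
(m_1, m_2, \ldots, m_{k-m+1}, m_{k-m+2}, \ldots, m_r) = (a_{\alpha_{k-m}}, a_{\alpha_{k-m-1}}, \ldots, a_{\alpha_0}, b_{\ell-1}, \ldots, b_0),
$$
where $a_{\alpha_0} = q_1$ if $\alpha_0 = 0$. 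Each surviving factor in $S_I$ then takes the form $(m_i + m_{i+1} + \cdots + m_{j-1})^{-s(i,j)}$ for some $1 \leq i < j \leq r+1$, which is precisely the shape of a summand in \eqref{A_def}.

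Reading off the positions, the factors with exponent $s_{k, \jmath}$ (for $1 \leq \jmath \leq \ell$) occupy the left-endpoint slots $s(1, 1+n)$ of the length-$n$ blocks with $n = k-m+1, \ldots, k-m+\ell$, whereas the grouped exponents $z_n^I$ occupy the right-endpoint slots $s(r+1-n, r+1)$ of the length-$n$ blocks with $n = \ell, \ldots, r$; every other $s(i,j)$ vanishes. Comparing with the definitions \eqref{un2} and \eqref{un3} of ${\bf u}_n^I$, these are exactly the entries $x$ and $y$ of the pattern ${\bf s}_h(x,y)$ assembled into ${\bf u}_n^I$, which gives \eqref{S_I=rootzeta}. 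The main obstacle is the combinatorial bookkeeping: one must verify the matching separately in the two ranges $k-m < \ell$ and $k-m \geq \ell$ (which determine whether the $s_{k,\jmath}$-block of lengths and the $z_n^I$-block of lengths are disjoint or overlap), and also confirm that the degenerate subcase $\alpha_0 = 0$, which forces $z_0^I = 0$ and removes the $s(k-m+2, r+1)$ contribution, is absorbed automatically since the right endpoint of ${\bf u}_\ell^I$ then reads as $0$.
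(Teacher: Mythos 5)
Your proposal is correct and follows essentially the same route as the paper: decompose $\sum_{\rm (i)}$ according to which of $q_1=a_0,a_1,\dots,a_k$ vanish (with the primed sum accounting for the excluded all-zero case), collapse the factors sharing a common base into single factors with exponents $z_n^I$, relabel the surviving positive variables as $(m_1,\dots,m_r)=(a_{\alpha_{k-m}},\dots,a_{\alpha_0},b_{\ell-1},\dots,b_0)$, and match the resulting product against the definition \eqref{A_def}. Your slot-by-slot bookkeeping of where the $s_{k,\jmath}$ and $z_n^I$ land (left and right endpoints of the length-$n$ blocks, respectively) and the treatment of the $\alpha_0=0$ degeneracy are consistent with \eqref{un2} and \eqref{un3}.
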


\begin{proof}
It is enough to prove \eqref{S_I=rootzeta}.
In the sum $S_I$, all $a_i=0$ for $i\in I$.    Therefore
\begin{align*}
S_I &=\sum_{\substack{\alpha_i\in J_I \\(0\leq i\leq k-m)}}\sum_{\substack{a_{\alpha_i}\geq 1\\ (0\leq i\leq k-m)}}
\sum_{\substack{b_j\geq 1 \\(0\leq j\leq \ell-1)}}(b_{\ell-1}+\cdots+b_0)^{-s_{00}-s_{10}-\cdots-s_{\alpha_0-1,0}}
\\
&\quad \cdot  (a_{\alpha_0}+b_{\ell-1}+\cdots+b_0)^{-s_{\alpha_0,0}-s_{\alpha_0+1,0}-\cdots-
s_{\alpha_1-1,0}}\cdots\\
&\quad\cdot
(a_{\alpha_{k-m}}+\cdots+a_{\alpha_0}+b_{\ell-1}+\cdots+b_0)
^{-s_{\alpha_{k-m},0}-s_{\alpha_{k-m}+1,0}-\cdots-s_{k0}}\\
& \quad\cdot \prod_{j=1}^{\ell-1}(a_{\alpha_{k-m}}+\cdots+a_{\alpha_0}+b_{\ell-1}+\cdots +b_{j})^{-s_{k j}}
(a_{\alpha_{k-m}}+\cdots+a_{\alpha_0})^{-s_{k \ell}}\\
&=\sum_{\substack{\alpha_i\in J_I \\(0\leq i\leq k-m)}}\sum_{\substack{a_{\alpha_i}\geq 1\\ (0\leq i\leq k-m)}}
\sum_{\substack{b_j\geq 1 \\(0\leq j\leq \ell-1)}}
(b_{\ell-1}+\cdots+b_0)^{-z_0^I}
(a_{\alpha_0}+b_{\ell-1}+\cdots+b_0)^{-z_1^I}\cdots\\
&\quad\cdot(a_{\alpha_{k-m}}+\cdots+a_{\alpha_{0}}+b_{\ell-1}+\cdots+b_0)
^{-z_{k-m+1}^I}\\
& \quad\cdot \prod_{j=1}^{\ell-1}(a_{\alpha_{k-m}}+\cdots+a_{\alpha_0}+b_{\ell-1}+\cdots +b_{j})^{-s_{k j}}
(a_{\alpha_{k-m}}+\cdots+a_{\alpha_0})^{-s_{k \ell}}.
\end{align*}

Comparing this with the definition \eqref{A_def} of zeta-functions of root systems of
type $A_r$, we obtain the conclusion \eqref{S_I=rootzeta}.
\end{proof}
\begin{Remark}\label{continuation_of_bullet}
From Theorem \ref{ThII1} and Theorem \ref{ThI}, we see that 
\begin{align}\label{remarkzetabullet}
\zeta_{k+\ell+1, {k+1}}^{\bullet}({\bf u}_1, {\bf u}_2, \cdots, {\bf u}_{k+\ell+1}, A_{k+\ell+1})
\end{align}
can be written as a linear combination of zeta-functions of root systems.
Thanks to the meromorphic continuation of zeta-functions of root systems (\cite{M}), 
we conclude that \eqref{remarkzetabullet} can also be continued meromorphically to the whole space.
\end{Remark}
%
%
%
\subsection{Evaluation of ${\sum}_{\rm (ii)}$}

Next consider the case of $m_{k 0}>m_{00}+m_{k \ell}$.
We can write 
\begin{equation}\label{expressionk0}
m_{k 0}=m_{00}+m_{k \ell}+h \;(h\geq 1).
\end{equation}
So, for ${\pmb s}\in W_{\lambda}$
\begin{eqnarray}
{\sum}_{\rm (ii)}
&=&
\sum_{M\in {\rm{SSYT}}( \theta) {\rm and} {\rm (ii)}}
{m_{00}^{-s_{00}}m_{10}^{-s_{10}}\cdots m_{k-1,0}^{-s_{k-1,0}} m_{k 0}^{-s_{k 0}}m_{k 1}^{-s_{k 1}}\cdots m_{k \ell}^{-s_{k \ell}}}\notag\\
&=&
\sum_{\substack{m_{00},m_{k\ell}, h\geq 1\\
a_i\geq 0 (1\leq i\leq k-1)\\
b_j\geq 1(1\leq j\leq \ell-1)\\
1\leq m_{k-1, 0}\leq m_{k0}, m_{k0}>m_{k1}\geq 1}}m_{00}^{-s_{00}}(m_{00}+a_1)^{-s_{10}}\cdots (m_{00}+a_1+\cdots +a_{k-1})^{-s_{k-1,0}}  \notag\\
& &
(m_{00}+m_{k\ell}+h)^{-s_{k 0}}(m_{k \ell}+b_{\ell -1}+\cdots + b_1)^{-s_{k 1}}\cdots (m_{k \ell}+b_{\ell-1})^{-s_{k, \ell-1}} m_{k\ell}^{-s_{k\ell}},\notag\\
& & \label{original}\end{eqnarray}
where the sum on the right-hand side runs over $m_{00}, m_{k\ell},
a_i \;(1\leq i\leq k-1), b_j \;(1\leq j\leq \ell-1), h$
with the conditions $1\leq m_{k-1, 0}\leq m_{k0}, m_{k0}>m_{k1}\geq 1$, 
and it is to be noted that $m_{k-1, 0}=m_{00}+a_1+\cdots +a_{k-1}$, $m_{k0}=m_{00}+m_{k\ell}+h$ and $m_{k1}=m_{k \ell}+b_{\ell -1}+\cdots + b_1$.
\\
Define 
\begin{eqnarray*}
F_{\mu}&=&(m_{00}+a_1)^{-s_{10}}(m_{00}+a_1+a_2)^{-s_{20}}\cdots (m_{00}+a_1+\cdots +a_{\mu})^{-s_{\mu 0}},\\ 
P_{\mu}&=&(m_{00}+m_{k\ell}+h+p_1)^{-s_{k-1, 0}}\cdots (m_{00}+m_{k\ell}+h+p_1+\cdots+p_{\mu})^{-s_{k-\mu,0}},\\
Q_{\nu}&=&(m_{00}+m_{k\ell}+h+q_1)^{-s_{k1}}\cdots (m_{00}+m_{k\ell}+h+q_1+\cdots +q_{\nu})^{-s_{k \nu}}, \\
R_{\mu, \nu}&=& P_{\mu}(m_{00}+m_{k\ell}+h)^{-s_{k0}} Q_{\nu}\\
&=& (m_{00}+m_{k\ell}+h+p_1+\cdots+p_{\mu})^{-s_{k-\mu,0}}\cdots (m_{00}+m_{k\ell}+h+p_1)^{-s_{k-1, 0}}\\
&&\cdot (m_{00}+m_{k\ell}+h)^{-s_{k0}}(m_{00}+m_{k\ell}+h+q_1)^{-s_{k1}}\cdots (m_{00}+m_{k\ell}+h+q_1+\cdots +q_{\nu})^{-s_{k \nu}}, \\
G_{\nu}&=&(m_{k\ell}+b_{\ell-1}+b_{\ell-2}+\cdots+b_{\nu})^{-s_{k \nu}}(m_{k\ell}+b_{\ell-1}+b_{\ell-2}+\cdots+b_{\nu+1})^{-s_{k, \nu+1}} \\
  &&\qquad\cdots (m_{k\ell}+b_{\ell-1})^{-s_{k, \ell-1}}
\end{eqnarray*}
for $1\leq \mu\leq k-1$, where $p_i\geq 1$ and $1\leq \nu\leq \ell-1$, where $q_j\geq 0$. 
Then we have the following relation.
\begin{Lemma}\label{sumII}
For ${\pmb s}\in W_{\theta}^{\circ}$, we have
\begin{align*}
\lefteqn{{\sum}_{\rm (ii)}=\sum_{m_{00}, m_{k\ell}, h\geq 1}
\sum_{\mu=0}^{k-1}\sum_{\nu=0}^{\ell-1}
\sum_{\substack{a_i\geq 0(1\leq i\leq k-(\mu+1))\\p_i\geq 1(1\leq i\leq \mu)}}
\sum_{\substack{b_{j}\geq 1(\nu+1\leq j\leq \ell-1)\\
q_{j}\geq 0(1\leq j\leq \nu)}}(-1)^{\mu+\nu}}\\
&&\cdot
m_{00}^{-s_{00}}\cdot F_{k-(\mu+1)}\cdot R_{\mu, \nu}
\cdot G_{\nu+1} \cdot
m_{k\ell}^{-s_{k\ell}},
\end{align*}
where we set $F_0=G_{\ell}=1$ and  $P_0=Q_0=1$. 
\end{Lemma}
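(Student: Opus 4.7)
The plan is to apply the inclusion--exclusion telescoping \eqref{basicdecomp} that was used in the proof of Theorem \ref{Thexpression}, but now to both of the order constraints that still survive in expression \eqref{original}, namely
$$
m_{k-1,0}\le m_{k0}\quad\text{(row side)}\qquad\text{and}\qquad m_{k1}<m_{k0}\quad\text{(column side)}.
$$
All other weak inequalities along the bottom row and all other strict inequalities along the rightmost column have already been absorbed, respectively, by the conditions $a_i\ge 0$ and $b_j\ge 1$ appearing in \eqref{original}.

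On the row side I would write $\sum_{m_{k-1,0}\le m_{k0}}=\sum_{m_{k-1,0}\ge 1}-\sum_{m_{k-1,0}>m_{k0}}$, and in the subtracted sum substitute $m_{k-1,0}=m_{k0}+p_1$ with $p_1\ge 1$. This both releases $a_{k-1}$ from the sum and exposes the next (now explicit) constraint $m_{k-2,0}\le m_{k-1,0}$, to which the same device is applied. After $\mu$ iterations the variables $a_{k-1},\ldots,a_{k-\mu}$ have been traded for $p_1,\ldots,p_\mu\ge 1$, an overall sign $(-1)^\mu$ has accumulated, the surviving $a$-factors are exactly the factors of $F_{k-(\mu+1)}$, and the newly produced factors are exactly those of $P_\mu$. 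The recursion terminates at $\mu=k-1$ because the residual constraint $m_{00}\le m_{10}=m_{k0}+p_1+\cdots+p_{k-1}$ is then automatic.

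A completely parallel argument handles the column: one writes $\sum_{m_{k1}<m_{k0}}=\sum_{m_{k1}\ge 1}-\sum_{m_{k1}\ge m_{k0}}$, replaces $b_1$ by $q_1\ge 0$ with $m_{k1}=m_{k0}+q_1$ in the subtracted sum, and iterates $\nu$ times. This yields $q_1,\ldots,q_\nu\ge 0$, sign $(-1)^\nu$, surviving $b$-factors equal to those of $G_{\nu+1}$, and generated factors equal to those of $Q_\nu$; the recursion stops at $\nu=\ell-1$. Since the row decomposition acts only on $\{a_i,p_i\}$ and the column decomposition only on $\{b_j,q_j\}$, while $m_{00}$, $m_{k\ell}$, $h$ and the central factor $(m_{00}+m_{k\ell}+h)^{-s_{k0}}$ are touched by neither, the two iterations commute and combine into a single double sum over $(\mu,\nu)\in\{0,\ldots,k-1\}\times\{0,\ldots,\ell-1\}$ with sign $(-1)^{\mu+\nu}$ and central block $R_{\mu,\nu}=P_\mu\cdot(m_{00}+m_{k\ell}+h)^{-s_{k0}}\cdot Q_\nu$, which is exactly the asserted identity.

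The hypothesis ${\pmb s}\in W_\theta^{\circ}$ is needed at every step to ensure that the intermediate ``free'' sums (with one of the $m_{k-i,0}$ or $m_{ki}$ declared unconstrained) converge absolutely, so that each application of \eqref{basicdecomp} is literally valid rather than merely formal. The main technical obstacle will be the bookkeeping: matching the surviving and generated factors with the definitions of $F_{k-(\mu+1)}$, $P_\mu$, $Q_\nu$, $G_{\nu+1}$, and $R_{\mu,\nu}$, and correctly handling the boundary cases $\mu\in\{0,k-1\}$ and $\nu\in\{0,\ell-1\}$ where one of the conventions $F_0=G_\ell=P_0=Q_0=1$ kicks in and where the residual constraint becomes automatic rather than explicit.
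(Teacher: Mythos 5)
Your proposal is correct and follows essentially the same route as the paper: the paper likewise applies the inclusion--exclusion decomposition \eqref{keystep2} iteratively to the surviving constraint on each arm (it happens to treat the column first and the row second, whereas you reverse the order, but as you note the two telescopings act on disjoint sets of variables and commute), with the same substitutions $m_{k-i,0}=m_{k0}+p_1+\cdots+p_i$ and $m_{kj}=m_{k0}+q_1+\cdots+q_j$, the same observation that each substitution releases an $a$- or $b$-variable and exposes the next inequality, and the same termination because the final residual inequalities $m_{00}\le m_{10}$ and $m_{k,\ell-1}>m_{k\ell}$ hold automatically. No gap.
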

\begin{proof}
Using the notation $F_{\mu}$ and $G_{\nu}$, we see that \eqref{original} can be expressed as
\begin{equation}\label{FG1}
{\sum}_{\rm (ii)}=
\sum_{\substack{m_{00},m_{k\ell},h\geq 1\\
a_i\geq 0 (1\leq i\leq k-1)\\
b_j\geq 1 (1\leq j\leq \ell-1)\\
1\leq m_{k-1, 0}\leq m_{k0}, m_{k0}>m_{k1}\geq 1}}
m_{00}^{-s_{00}}\cdot F_{k-1}\cdot
(m_{00}+m_{k\ell}+h)^{-s_{k 0}}
\cdot G_1 \cdot
m_{k\ell}^{-s_{k\ell}}.
\end{equation}
Here we use the same type of decomposition as \eqref{basicdecomp} in the opposite direction.
The summation over $m_{k0}>m_{k1}$ may be 
decomposed as 
\begin{equation}\label{keystep2}
\sum_{m_{k0}>m_{k1}\geq 1}=\sum_{m_{k0}, m_{k1}\geq 1}-\sum_{1\leq m_{k0}\leq m_{k1}},
\end{equation} 
where in the first sum on the right-hand side no inequality between $m_{k0}$ and
$m_{k1}$ is imposed. Hence
for ${\pmb s}\in W_{\lambda}^{\circ}$,
the expression in
\eqref{FG1} becomes
\begin{eqnarray*}
{\sum}_{\rm (ii)}&=&
\sum_
{\substack{m_{00},m_{k\ell},h\geq 1\\
a_i\geq 0 (1\leq i\leq k-1)\\
b_j\geq 1 (1\leq j\leq \ell-1)\\
1\leq m_{k-1, 0}\leq m_{k0}
}}
m_{00}^{-s_{00}}\cdot F_{k-1}\cdot
(m_{00}+m_{k\ell}+h)^{-s_{k 0}}
\cdot G_1 \cdot
m_{k\ell}^{-s_{k\ell}}\\
&&- \sum_
{\substack{m_{00},m_{k\ell},h\geq 1\\
a_i\geq 0 (1\leq i\leq k-1)\\
b_j\geq 1 (1\leq j\leq \ell-1)\\
1\leq m_{k-1, 0}\leq m_{k0}\leq m_{k1}
}}
m_{00}^{-s_{00}}\cdot F_{k-1}\cdot
(m_{00}+m_{k\ell}+h)^{-s_{k 0}}
\cdot G_1 \cdot
m_{k\ell}^{-s_{k\ell}}.
\end{eqnarray*}
In the second sum, $m_{k0}\leq m_{k1}$ means $m_{k1}=m_{00}+m_{kl}+h+q_1$ ($q_1\geq 0$).
If we insert this expression into the term $m_{k1}$, then the inequality
$m_{k1}>m_{k2}$ (which was originally indicated by $b_1\geq 1$)
is no longer indicated, so we have to write this inequality explicitly.
Therefore we now obtain 
\begin{eqnarray*}
\lefteqn{{\sum}_{\rm (ii)}=
\sum_
{\substack{m_{00},m_{k\ell},h\geq 1\\
a_i\geq 0 (1\leq i\leq k-1)\\
1\leq m_{k-1, 0}\leq m_{k0}
}}
m_{00}^{-s_{00}}\cdot F_{k-1}\cdot
(m_{00}+m_{k\ell}+h)^{-s_{k 0}}}
\\
&& \cdot\left\{\sum_{b_j\geq 1(1\leq j\leq \ell-1)} G_1 \cdot
m_{k\ell}^{-s_{k\ell}}
-\sum_{\substack{b_j\geq 1(2\leq j\leq\ell-1), q_1\geq 0\\m_{k1}> m_{k2}\geq 1}}
(m_{00}+m_{k\ell}+h+q_1)^{-s_{k 1}}
\cdot G_2 \cdot
m_{k\ell}^{-s_{k\ell}}\right\}.
\end{eqnarray*}
A similar calculation shows
\begin{eqnarray*}
&&{\sum}_{\rm (ii)}=
\sum_
{\substack{m_{00},m_{k\ell},h\geq 1\\
a_i\geq 0 (1\leq i\leq k-1)\\
1\leq m_{k-1, 0}\leq m_{k0}
}}
m_{00}^{-s_{00}}\cdot F_{k-1}\cdot
(m_{00}+m_{k\ell}+h)^{-s_{k 0}}
\\
&& \cdot\left\{\sum_{\substack{b_j\geq 1\\(1\leq j\leq\ell-1)}} G_1 \cdot
m_{k\ell}^{-s_{k\ell}}
-
\left(\sum_{\substack{b_j\geq 1 (2\leq j\leq\ell-1)\\q_1\geq 0}}-\sum_{\substack{b_j\geq 1(2\leq j\leq\ell-1)\\
q_1\geq 0, 1\leq m_{k1}\leq m_{k2}}}\right)
(m_{00}+m_{k\ell}+h+q_1)^{-s_{k 1}}
\cdot G_2 \cdot
m_{k\ell}^{-s_{k\ell}}\right\}
\end{eqnarray*}
and 
$m_{k1}\leq m_{k2}$ means $m_{k2}=m_{00}+m_{k\ell}+h+q_1+q_2$ ($q_2\geq 0$), and so 
we have
\begin{eqnarray*}
\lefteqn{{\sum}_{\rm (ii)}=
\sum_
{\substack{m_{00},m_{k\ell},h\geq 1\\
a_i\geq 0 (1\leq i\leq k-1)\\
1\leq m_{k-1, 0}\leq m_{k0}
}}
m_{00}^{-s_{00}}\cdot F_{k-1}\cdot
(m_{00}+m_{k\ell}+h)^{-s_{k 0}}}
\\
&& \cdot\left\{\sum_{b_j\geq 1(1\leq j\leq\ell-1)} G_1 \cdot
m_{k\ell}^{-s_{k\ell}}
-\sum_{\substack{b_j\geq 1(2\leq j\leq\ell-1)\\
q_1\geq 0}}
Q_1\cdot G_2 \cdot
m_{k\ell}^{-s_{k\ell}}
+
\sum_{\substack{b_j\geq 1 (3\leq j\leq\ell-1)\\q_1,q_2\geq 0, m_{k2}> m_{k3}\geq 1}}
Q_2
\cdot G_3 \cdot
m_{k\ell}^{-s_{k\ell}}\right\}.
\end{eqnarray*} 
Apply the same argument repeatedly.     In the last stage we encounter the inequality
$m_{k,\ell-1}>m_{k\ell}$, where $m_{k,\ell-1}=m_{00}+m_{k\ell}+h+q_1+\cdots+q_{\ell-1}$,
but this inequality holds trivially.
Therefore we now obtain 
\begin{eqnarray*}
{\sum}_{\rm (ii)}&=&
\sum_
{\substack{m_{00},m_{k\ell},h\geq 1\\
a_i\geq 0 (1\leq i\leq k-1)\\
1\leq m_{k-1, 0}\leq m_{k0}
}}
m_{00}^{-s_{00}}\cdot F_{k-1}\cdot
(m_{00}+m_{k\ell}+h)^{-s_{k 0}}\\
&&\cdot\left\{
\sum_{\nu=0}^{ \ell-1}\sum_{\substack{b_{j}\geq 1 (\nu+1\leq j\leq \ell-1)\\
q_{j}\geq 0(1\leq j\leq \nu)}}
(-1)^{\nu}
Q_{\nu}\cdot G_{\nu+1} \cdot
m_{k\ell}^{-s_{k\ell}}\right\}.
\end{eqnarray*} 
Next, we take into consideration of the first summation, $\sum_{1\leq m_{k-1,0}\leq m_{k0}}$.
Again we apply the decomposition $\sum_{1\leq m_{k-1,0}\leq m_{k0}}=\sum_{m_{k0},m_{k1}\geq 1}-\sum_{m_{k-1,0}> m_{k0}\geq 1}$ to obtain 
\begin{eqnarray*}
{\sum}_{\rm (ii)}&=&
\left(\sum_{\substack{m_{00},m_{k\ell}, h\geq 1\\
a_i\geq 0 (1\leq i\leq k-1)}}-\sum_{\substack{m_{00},m_{k\ell}, h\geq 1\\
a_i\geq 0 (1\leq i \leq k-1)\\
m_{k-1,0}> m_{k0}\geq 1}}\right)
m_{00}^{-s_{00}}\cdot F_{k-1}\cdot
(m_{00}+m_{k\ell}+h)^{-s_{k 0}}\\
&&
\cdot\left\{\;\sum_{\nu=0}^{ \ell-1}\sum_
{\substack{b_{j}\geq 1 (\nu+1\leq j\leq \ell-1)\\
q_{j}\geq 0(1\leq j\leq \nu)}}
(-1)^{\nu}
Q_{\nu}\cdot G_{\nu+1} \cdot
m_{k\ell}^{-s_{k\ell}}\right\}.
\end{eqnarray*} 
As $m_{k-1,0}> m_{k0}\geq 1$ means 
$m_{k-1,0}=m_{00}+m_{k\ell}+h+p_1$ ($p_1\geq 1$) we have
\begin{eqnarray*}
{\sum}_{\rm (ii)}&=&
\left\{\sum_
{\substack{m_{00},m_{k\ell}, h\geq 1\\
a_i\geq 0 (1\leq i\leq k-1)}}
m_{00}^{-s_{00}}\cdot F_{k-1}\cdot
(m_{00}+m_{k\ell}+h)^{-s_{k 0}}\right.\\
&&
-\left.\sum_{\substack{m_{00},m_{k\ell}, h\geq 1\\a_i\geq 0 (1\leq i\leq k-2), p_1\geq 1\\1\leq m_{k-2,0}\leq m_{k-1,0}}}
m_{00}^{-s_{00}}\cdot F_{k-2}\cdot
(m_{00}+m_{k\ell}+h+p_1)^{-s_{k-1, 0}}
(m_{00}+m_{k\ell}+h)^{-s_{k 0}}\right\}
\\
&& 
\cdot\left\{\;\sum_{\nu=0}^{ \ell-1}\sum_
{\substack{b_{j}\geq 1 (\nu+1\leq j\leq \ell-1)\\
q_{j}\geq 0(1\leq j\leq \nu)}}(-1)^{\nu}
Q_{\nu}\cdot G_{\nu+1} \cdot
m_{k\ell}^{-s_{k\ell}}\right\}.
\end{eqnarray*} 
Repeating the similar calculation we obtain, for $m_{00}, m_{k\ell}, b_j, p_i, h\geq 1$ 
and $a_i, q_j \geq 0$,
\begin{align}
\lefteqn{{\sum}_{\rm (ii)}}\notag\\
&=
\sum_
{\substack{m_{00},m_{k\ell}, h\geq 1\\
a_i\geq 0 (1\leq i\leq k-1)}}
\left\{\;\sum_{\mu=0}^{k-1}\sum_{\substack{a_i\geq 0 (1\leq i \leq k-(\mu+1))\\
p_i\geq 1 (1\leq i \leq \mu)}}(-1)^{\mu}
m_{00}^{-s_{00}}\cdot F_{k-(\mu+1)}\cdot
P_{\mu}\cdot 
(m_{00}+m_{k\ell}+h)^{-s_{k 0}}\right\}\cdot \notag\\
&\qquad \left\{\; \sum_{\nu=0}^{ \ell-1}\sum_{\substack{b_{j}\geq 1 (\nu+1\leq j\leq \ell-1)\\
q_{j}\geq 0 (1\leq j\leq \nu)}}(-1)^{\nu}
Q_{\nu}\cdot G_{\nu+1} \cdot
m_{k\ell}^{-s_{k\ell}}\right\}\notag\\
&=
\sum_{m_{00}, m_{k\ell}, h\geq 1}
\left\{\;\sum_{\mu=0}^{k-1}\sum_{\nu=0}^{ \ell-1}\sum_{\substack{a_i\geq 0 (1\leq i\leq k-(\mu+1))\\p_i\geq 1(1\leq i\leq \mu)}}
\sum_{\substack{b_{j}\geq 1 (\nu+1\leq j\leq \ell-1)\\
q_{j}\geq 0 (1\leq j\leq \nu)}}
(-1)^{\mu+\nu}
m_{00}^{-s_{00}}\cdot F_{k-(\mu+1)}\cdot
R_{\mu, \nu}\cdot G_{\nu+1} \cdot
m_{k\ell}^{-s_{k\ell}}\right\}.
\label{tuika-tuika}
\end{align} 
This leads to the lemma.
\end{proof}

The above lemma gives the following expression for ${\sum}_{\rm (ii)}$.
\begin{Theorem}\label{ThII2}
Let ${\frak s}_1$,
${\frak v}$,
and
${\frak s}_2$ 
be as in the statement of Theorem \ref{HurwitzTheorem}.
Then for ${\pmb s}\in W_{\theta}^{\circ}$, we have
\begin{align*}
{\sum}_{\rm (ii)}=
\sum_{\mu=0}^{k-1}\sum_{\nu=0}^{\ell-1}(-1)^{\mu+\nu}Z_{\mu,\nu}({\pmb s};(k,\ell)),
\end{align*}
where $Z_{\mu,\nu}({\pmb s};(k,\ell))$ is defined by \eqref{Zmunu}.
\end{Theorem}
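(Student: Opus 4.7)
The plan is to start from Lemma \ref{sumII}, which has already reduced ${\sum}_{\rm (ii)}$ to an explicit multiple sum involving the building blocks $F_{k-(\mu+1)}$, $R_{\mu,\nu}$, and $G_{\nu+1}$, together with the factor $(-1)^{\mu+\nu}m_{00}^{-s_{00}}m_{k\ell}^{-s_{k\ell}}$. For each fixed pair $(\mu,\nu)$, the remaining summation variables split into three disjoint groups that interact only through the fixed base quantities $m_{00}$, $m_{k\ell}$, and $m_{00}+m_{k\ell}$: (A) the indices $a_i$ with $a_i\geq 0$, which appear inside $F_{k-(\mu+1)}$ and involve only $m_{00}$; (B) the block $(h, p_1, \ldots, p_\mu, q_1,\ldots, q_\nu)$ inside $R_{\mu,\nu}$, involving only $m_{00}+m_{k\ell}$; (C) the indices $b_j$ with $\nu+1\leq j\leq \ell-1$ inside $G_{\nu+1}$, involving only $m_{k\ell}$. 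I would therefore sum each group independently and identify each partial sum with a Hurwitz-type zeta-function of a root system of type $A$.

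For group (A), under the identification $m_i\leftrightarrow a_i$, the product $F_{k-(\mu+1)}$ contains exactly one length-$i$ term $(m_{00}+m_1+\cdots+m_i)^{-s_{i0}}$ for each $1\leq i\leq k-(\mu+1)$, and no other non-trivial factors of the $A_{k-(\mu+1)}$ root system. Under the ordering convention \eqref{ordervector}, each such non-zero exponent sits at the leading component of its length-block, which is precisely the ${\bf s}_h(s_{i0},0)$-shape recorded by ${\frak s}_1$. Since each $a_i$ ranges freely over $\{0,1,2,\ldots\}$ while the base $m_{00}\geq 1$ keeps every factor positive (so no prime-exclusion is needed), the partial sum equals $\zeta^{\bullet,H}_{k-(\mu+1),k-(\mu+1)}({\frak s}_1,m_{00},A_{k-(\mu+1)})$. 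Group (C) is treated symmetrically under $m_i\leftrightarrow b_{\ell-i}$ (reversing indices so that the innermost $b_{\ell-1}$ plays the role of $m_1$), giving $\zeta^{H}_{\ell-(\nu+1)}({\frak s}_2,m_{k\ell},A_{\ell-(\nu+1)})$; since every $b_j\geq 1$, no bullet is needed here.

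The central step is the identification of group (B). The reordering $(m_1,\ldots,m_{\mu+\nu+1})=(q_\nu,q_{\nu-1},\ldots,q_1,h,p_1,\ldots,p_\mu)$ places the $\{0,1,2,\ldots\}$-valued variables at positions $1,\ldots,\nu$, as required by $\zeta^{\bullet,H}_{\mu+\nu+1,\nu}$. A direct check then shows that in $R_{\mu,\nu}$ the non-zero exponents are distributed as follows: the length-$1$ block carries $s_{k0}$ at position $\nu+1$ (the $h$-slot), and for $n\geq 1$ the length-$(n+1)$ block carries $s_{kn}$ at position $\nu+1-n$ (whenever $n\leq \nu$) and $s_{k-n, 0}$ at position $\nu+1$ (whenever $n\leq \mu$); all other components of the exponent vector vanish. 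This is precisely the pattern encoded by ${\bf v}_n$ in \eqref{vn0} and \eqref{vn1}, where the case distinction $\nu<\mu$ versus $\nu\geq\mu$ records whether the $q$-side or the $p$-side terminates first as $n$ grows. Hence the partial sum equals $\zeta^{\bullet,H}_{\mu+\nu+1,\nu}({\frak v},m_{00}+m_{k\ell},A_{\mu+\nu+1})$. Multiplying the three partial sums together, restoring the outer sum over $m_{00},m_{k\ell}\geq 1$ with the factor $m_{00}^{-s_{00}}m_{k\ell}^{-s_{k\ell}}(-1)^{\mu+\nu}$, and finally summing over $\mu$ and $\nu$, yields the stated identity.

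The main obstacle will be the combinatorial bookkeeping in group (B): one must carefully verify the reversal convention for the $q_j$'s, pin down the precise slot of each non-zero exponent within every length-block of $A_{\mu+\nu+1}$, and reconcile this against the piecewise formulas \eqref{vn0} and \eqref{vn1}. Once this matching is carried out, the identifications for ${\frak s}_1$ and ${\frak s}_2$ are straightforward, and absolute convergence on $W_\theta^\circ$ is inherited from Lemma \ref{sumII}.
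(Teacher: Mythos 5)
Your proposal is correct and follows essentially the same route as the paper: it starts from Lemma \ref{sumII}, observes that for each fixed $(\mu,\nu)$ the inner summation factors into the three independent groups carried by $F_{k-(\mu+1)}$, $R_{\mu,\nu}$ and $G_{\nu+1}$, and identifies each partial sum with $\zeta^{\bullet,H}_{k-(\mu+1),k-(\mu+1)}({\frak s}_1,m_{00},A_{k-(\mu+1)})$, $\zeta^{\bullet,H}_{\mu+\nu+1,\nu}({\frak v},m_{00}+m_{k\ell},A_{\mu+\nu+1})$ and $\zeta^{H}_{\ell-(\nu+1)}({\frak s}_2,m_{k\ell},A_{\ell-(\nu+1)})$, exactly as in the paper's proof. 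Your bookkeeping for the block ${\bf v}_n$ (the reversal $m_1=q_\nu,\ldots,m_\nu=q_1$, $m_{\nu+1}=h$, $m_{\nu+1+i}=p_i$, with $s_{kn}$ at slot $\nu+1-n$ and $s_{k-n,0}$ at slot $\nu+1$) matches \eqref{vn0}--\eqref{vn1}.
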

\begin{proof}\quad 
For any $\mu$, 
\begin{eqnarray*}
\sum_{a_i\geq 0 (1\leq i\leq k-( \mu+1))}
 F_{k-(\mu+1)}
&=&\sum_{a_i\geq 0 (1\leq i\leq k-( \mu+1))}
(m_{00}+a_1)^{-s_{10}}\cdots (m_{00}+a_1\cdots +a_{k-(\mu+1)})^{-s_{k-(\mu+1), 0}}\\
&=& \zeta^{\bullet, H}_{k-(\mu+1), k-(\mu+1)}({\frak s}_1, m_{00}, A_{k-(\mu+1)}),
\end{eqnarray*} 
where ${\frak s}_1=({\bf s}_{k-(\mu+1)}(s_{10}, 0), {\bf s}_{k-(\mu+1)-1}(s_{20}, 0), \cdots, {\bf s}_{2}(s_{k-(\mu+1)-1, 0}, 0), s_{k-(\mu+1), 0})$.\\
For any $\mu$ and $\nu$,
\begin{eqnarray*}
&&\sum_{p_i\geq 1 (1\leq i\leq \mu), q_j\geq 0 (1\leq j\leq \nu),h\geq 1}
R_{\mu, \nu}\\
&=&\sum_{p_i \geq 1(1\leq i\leq \mu), q_j\geq 0 (1\leq j\leq \nu),h\geq 1}
(m_{00}+m_{k\ell}+h+q_1+\cdots +q_{\nu})^{-s_{k\nu}}
\cdots 
(m_{00}+m_{k\ell}+h+q_1)^{-s_{k1}}\\
&& \hspace{1cm}
(m_{00}+m_{k\ell}+h)^{-s_{k 0}}
(m_{00}+m_{k\ell}+h+p_1)^{-s_{k-1, 0}}
\cdots \\
&& \hspace{1cm}
(m_{00}+m_{k\ell}+h+p_1+\cdots +p_{\mu})^{-s_{k-\mu, 0}}
\\
&=& \zeta_{\mu+\nu+1, \nu}^{\bullet, H}
({\frak v}, m_{00}+m_{k\ell}, A_{\mu+\nu+1}),
\end{eqnarray*} 
where ${\frak v}=({\bf v}_0, {\bf v}_1, \cdots, {\bf v}_{\mu+\nu+1})$.\\
Lastly, for any $\nu$,
\begin{eqnarray*}
\sum_{b_i \geq 0 (\nu+1\leq i\leq \ell-1)}
G_{\nu+1}
&=&\sum_{b_i\geq 0 (\nu+1\leq i\leq \ell-1)}
(m_{k\ell}+b_{\ell-1}+\cdots + b_{\nu+1})^{-s_{k\nu}}\cdots (m_{k\ell}+b_{\ell-1})^{-s_{k, \ell-1}}\\
&=&
\zeta_{\ell-(\nu+1)}^H({\frak s}_2, m_{k\ell}, A_{\ell-(\nu+1)}),
\end{eqnarray*} 
where ${\frak s}_2=({\bf s}_{\ell-\nu-1}(s_{k, {\ell-1}},0), {\bf s}_{\ell-\nu-2}(s_{k, {\ell-2}},0)\cdots, 
{\bf s}_{2}(s_{k, {\nu+2}},0),  s_{k,\nu+1})$.\\
Lemma \ref{sumII} with those calculations lead to the theorem.
\end{proof}

\subsection{An example}

In Example \ref{Ex4-1}, we described what Theorem \ref{HurwitzTheorem} implies
in the simplest case $(k,\ell)=(1,1)$.
To illustrate the contents of Theorem \ref{HurwitzTheorem} more, here we write down the
formulas for $\sum_{\rm (i)}$ and $\sum_{\rm (ii)}$ in the case of type 
$\theta={\rm rib}(2 | 3)$.
When
\hspace{1cm}${\pmb s}=
\ytableausetup{boxsize=normal}  
\begin{ytableau}
  \none & \none &   s_{2 3}\\
  \none & \none &   s_{2 2}\\
  \none & \none &   s_{21}\\
 s_{00} & s_{10}  & s_{2 0}
\end{ytableau}$\quad,
 \vspace{3mm}\\
\hspace{5mm}
$$\displaystyle{
 \zeta_{\theta}({\pmb s}) =
 \sum_{M\in {\rm{SSYT}}( \theta)}
 {m_{00}^{-s_{00}}m_{10}^{-s_{10}}m_{2 0}^{-s_{2 0}}m_{2 1}^{-s_{2 1}} m_{2 2}^{-s_{2 2}}m_{2 3}^{-s_{2 3}}}
 }.
 $$

Using Theorem \ref{ThI}, for $m_{20}\leq m_{00}+m_{23}$ we have
$$
\begin{array}{lll}
\displaystyle{{\sum}_{\rm (i)}}
&=&\displaystyle{\sum_{m=0}^2\sum_{I\in I_m^{[0, 2]}}\zeta_{6-m}({\bf u}_1^I, {\bf u}_2^I, \cdots, {\bf u}_{6-m}^I, A_{6-m})}\vspace{2mm}\\
&=&\displaystyle{\sum_{I\in I_0^{[0,2]}}\zeta_{6}({\bf u}_1^I, {\bf u}_2^I, \cdots, {\bf u}_{6}^I, A_{6})}\displaystyle{+\sum_{I\in I_1^{[0, 2]}}\zeta_{5}({\bf u}_1^I, {\bf u}_2^I, {\bf u}_3^I, {\bf u}_4^I, {\bf u}_5^I, A_{5})}\\
&&\qquad\qquad\displaystyle{+\sum_{I\in I_2^{[0, 2]}}\zeta_{4}({\bf u}_1^I, {\bf u}_2^I, {\bf u}_3^I, {\bf u}_4^I, A_{4})}\vspace{2mm}\\
&=&\displaystyle{\sum_{I\in I_0^{[0,2]}}\zeta_{6}({\bf s}_6(0, 0), {\bf s}_5(0, 0), {\bf s}_4(s_{23}, z_0^I), {\bf s}_3(s_{22}, z_{1}^I), {\bf s}_2(s_{21}, z_{2}^I), z_{3}^I, A_{6})}\vspace{2mm}\\
&&\displaystyle{+\sum_{I\in I_1^{[0, 2]}}\zeta_{5}({\bf s}_5(0, 0), {\bf s}_4(s_{23}, 0), {\bf s}_3(s_{22}, z_0^I), {\bf s}_2(s_{21}, z_{1}^I), z_{2}^I), A_{5})}\vspace{2mm}\\
&&\displaystyle{+\sum_{I\in I_2^{[0, 2]}}\zeta_{4}({\bf s}_4(s_{23}, 0), {\bf s}_3(s_{22}, 0), {\bf s}_2(s_{21}, z_0^I), z_{1}^I, A_{4})}
\end{array}
$$
(again we replace the commas between $s_i( \cdot)$ and $s_{i-1}(\cdot)$ by vertical bars).
$$
\begin{array}{lll}
\displaystyle{{\sum}_{\rm (i)}}
&=&\displaystyle{\zeta_{6}(0^{6} | 0^5 | s_{23}, 0, 0, 0 | s_{22}, 0, s_{00} | s_{21}, s_{10} | s_{20},A_6)}\vspace{2mm}\\
&&\displaystyle{+\zeta_{5}(0^5 | s_{23}, 0, 0, 0 | s_{22}, 0, s_{00} | s_{21}, s_{10} | s_{20}, A_5)}\vspace{2mm}\\
&&\displaystyle{+\zeta_{5}(0^5 | s_{23}, 0, 0, 0 | s_{22}, 0, 0 | s_{21}, s_{00}+s_{10} | s_{20}, A_5)}\vspace{2mm}\\
&&\displaystyle{+\zeta_{5}(0^5 | s_{23}, 0, 0, 0 | s_{22}, 0, 0 | s_{21}, s_{00} | s_{10}+s_{20}, A_5)}\vspace{2mm}\\
&&\displaystyle{
+\zeta_{4}(s_{23}, 0, 0, 0 | s_{22}, 0, 0 | s_{21}, s_{00}+s_{10} | s_{20}, A_4)}\vspace{2mm}\\
&&\displaystyle{
+\zeta_{4}(s_{23}, 0, 0, 0 | s_{22}, 0, 0 | s_{21}, s_{00} | s_{10}+s_{20}, A_4)}\vspace{2mm}\\
&&\displaystyle{
+\zeta_{4}(s_{23}, 0, 0, 0 | s_{22}, 0, 0 | s_{21}, 0 | s_{00}+s_{10}+s_{20}, A_4)}.
\end{array}
$$
And from Theorem \ref{ThII2}, we have 
\begin{eqnarray*}
{\sum}_{\rm (ii)}
&=& \sum_{m_{00}, m_{23}, h\geq 1}m_{00}^{-s_{00}}m_{23}^{-s_{23}}\\
&& 
\{\zeta_{1, 1}^{\bullet, H}(s_{10}, m_{00}, A_1)
\zeta_{1}^{H}(s_{20}, m_{00}+m_{k\ell}, A_1)
\zeta_{2}^{H}((s_{22}, 0 | s_{21}), m_{k\ell}, A_2)
\\
&&-
\zeta_{1, 1}^{\bullet, H}(s_{10}, m_{00}, A_1)
\zeta_{2, 1}^{\bullet, H}((0, s_{20} | s_{21}), m_{00}+m_{k\ell}, A_2)
\zeta_{1}^{H}(s_{22}, m_{k\ell}, A_1)
\\
&&+
\zeta_{1, 1}^{\bullet, H}(s_{10}, m_{00}, A_1)
\zeta_{3, 2}^{\bullet, H}((0, 0, s_{20} | 0, s_{21} | s_{22}) , m_{00}+m_{k\ell}, A_3)
\\
&&-
\zeta_{2}^{H}((s_{20}, 0 | s_{10}), m_{00}+m_{k\ell}, A_2)
\zeta_{2}^{H}((s_{22}, 0 | s_{21}), m_{k\ell}, A_2)
\\
&&+
\zeta_{3,1}^{\bullet, H}((0, s_{20}, 0 | s_{21}, s_{10} | 0), m_{00}+m_{k\ell}, A_3)
\zeta_{1}^{H}(s_{22}, m_{k\ell}, A_1)
\\
&&-
\zeta_{4,2}^{\bullet, H}((0, 0, s_{20}, 0 | 0, s_{21}, s_{10} | s_{22}, 0 | 0), m_{00}+m_{k\ell}, A_4)
\}.
\end{eqnarray*}
\section{The pictorial interpretation}\label{pictorial}

The following pictorial interpretation of the proofs developed in Sections
\ref{proof_FMT} and \ref{proof_SMT} may help the reader to improve the understandability.

In Section \ref{proof_FMT}, \eqref{basicdecomp} is the key step and we obtained  \eqref{pic5}. We may  
write it as
$$
\zeta_{\mathrm{rib}(k|\ell)}
\left(
\mbox{\raisebox{6mm}
{\ytableausetup{boxsize=1.7em}  
\begin{ytableau}
  \none & \none & \none&  *(gray) s_{k \ell}\\
  \none & \none & \none&  *(gray) \vdots\\
  \none & \none & \none & *(gray)  s_{k1}\\
 s_{00} & s_{10}  & \cdots & *(gray) s_{k 0}
\end{ytableau}}}\;
\right)
=
\zeta_{(k)}\left(
\mbox{\raisebox{-2mm}{\ytableausetup{boxsize=1.7em}  
\begin{ytableau}
 s_{00} & s_{10}  & \cdots & s_{k'0}
\end{ytableau}}}\;
\right)\cdot
\zeta_{(1^{\ell+1})}\left(
\mbox{\raisebox{6mm}{\ytableausetup{boxsize=1.7em}  
\begin{ytableau}
*(gray) s_{k \ell}\\
*(gray) \vdots\\
*(gray) s_{k1}\\
*(gray) s_{k 0}
\end{ytableau}}}\;
\right)-
\zeta_{\mathrm{rib}(k-1|\ell+1)}\left(
\mbox{\raisebox{9mm}{\ytableausetup{boxsize=1.7em}  
\begin{ytableau}
  \none & \none & \none&   *(gray) s_{k \ell}\\
  \none & \none & \none&   *(gray) \vdots\\
  \none & \none & \none &   *(gray) s_{k1}\\
  \none & \none & \none &   *(gray) s_{k0}\\
   s_{00} & s_{10}  & \cdots & s_{k'0}
\end{ytableau}}}\;
\right)
$$
where $k'=k-1$. This is actually a kind of harmonic product formulas for Schur
multiple zeta-functions, which is seen in Lemma 2.2 in \cite{BachY}.
Applying the same argument repeatedly to the Schur multiple 
zeta-function of anti-hook type on the right hand side we obtain the following relation, which is same as
\eqref{intermsofEZ}:
$$
\zeta_{\mathrm{rib}(k|\ell)}\left(
\mbox{\raisebox{6mm}{\ytableausetup{boxsize=1.7em} 
\begin{ytableau}
  \none & \none & \none&  *(gray) s_{k \ell}\\
  \none & \none & \none&  *(gray) \vdots\\
  \none & \none & \none & *(gray)  s_{k1}\\
 s_{00} & s_{10}  & \cdots & *(gray) s_{k 0}
\end{ytableau}}}\;
\right)=
 \sum_{i=0}^{k}(-1)^{k-i}
\zeta_{(i)}\left(
\mbox{\raisebox{-2mm}{\ytableausetup{boxsize=1.7em} 
\begin{ytableau}
 s_{00} & s_{10}  & \cdots & s_{i'0}
\end{ytableau}}}
\right)\cdot
\zeta_{(1^{\ell+k-i+1})}\left(
\mbox{\raisebox{17mm}{\ytableausetup{boxsize=1.7em} 
\begin{ytableau}
*(gray) s_{k \ell}\\
*(gray) \vdots\\
*(gray) s_{k1}\\
*(gray) s_{k 0}\\
 s_{k'0}\\
 \vdots\\
 s_{i''0}\\
 s_{i0}
\end{ytableau}}}\;
\right),
$$
where $i'=i-1, i''=i+1$.

To obtain the pictorial interpretation for the calculations in Section \ref{proof_SMT}, we introduce the following new kinds zeta-functions of Hurwitz type.
For  ${\pmb s}=(s_{ij})\in T(\lambda,\mathbb{C})$ and $x>0$, the Schur multiple zeta-function of Hurwitz type associated with $\lambda$ is defined by the series 
$$
\zeta^H_{\lambda}({ \pmb s}, x)=\sum_{M\in \mathrm{SSYT}(\lambda)}
{(M+x)^{ -\pmb s}}, 
$$
where $(M+x)^{ -\pmb s}=\displaystyle{\prod_{(i, j)\in \lambda}(m_{ij}+x)^{-s_{ij}}}$ for $M=(m_{ij})\in \mathrm{SSYT}(\lambda)$. 
Here, each $m_{ij}\in\mathbb{N}$ by definition.
If we allow some $m_{ij}=0$, we call $M=(m_{ij})$ a modified semi-standard Young tableau,
and for such a tableau we may define the zeta-function of Hurwitz type in the same way
as above.
Let $(I, J)$ be the most top left box of $M$.   If $m_{IJ}\geq 0$ (resp. $m_{IJ}=0$) instead of $m_{IJ}\geq 1$, then we write the asssociated zeta-function as 
$
\zeta_{\lambda}^{H, 0}({ \pmb s}, x)
$ (resp.$
\zeta_{\lambda}^{H, 00}({ \pmb s}, x)
$) instead.
Furthermore,
for $p>0$, the finite Hurwiz-type Schur multiple zeta-function associated with $\lambda$ is defined by the series 
$$
\zeta^H_{\lambda}({ \pmb s}, x)_{\leq p}=\sum_{M\in \mathrm{SSYT}(\lambda)_{\leq p-x}}
{(M+x)^{ -\pmb s}}, 
\quad
\zeta^H_{\lambda}({ \pmb s}, x)_{< p}=\sum_{M\in \mathrm{SSYT}(\lambda)_{< p-x}}
{(M+x)^{ -\pmb s}}, 
$$
where $(M+x)^{ -\pmb s}=\displaystyle{\prod_{(i, j)\in \lambda}(m_{ij}+x)^{-s_{ij}}}$ for $M=(m_{ij})\in \mathrm{SSYT}(\lambda)_{\leq p-x}$
or for $M=(m_{ij})\in \mathrm{SSYT}(\lambda)_{< p-x}$.
Here, $\mathrm{SSYT}(\lambda)_{\leq p-x}$ (resp. $\mathrm{SSYT}(\lambda)_{< p-x}$) means that any element $(m_{ij})$ satisfies $m_{ij}+x\leq p$
(resp. $m_{ij}+x< p$).
In case of modified semi-standard Young tableaux, we write the associated zeta-function by
$
\zeta_{\lambda}^{H, 0}({ \pmb s}, x)_{\leq p}
$
( resp. $
\zeta_{\lambda}^{H, 0}({ \pmb s}, x)_{< p}
$)
or 
$
\zeta_{\lambda}^{H, 00}({ \pmb s}, x)_{\leq p}
$
(resp. $
\zeta_{\lambda}^{H, 00}({ \pmb s}, x)_{< p}
$).

Then, using the above notation, we find that \eqref{FG1} can be expressed as 
$$
{\sum}_{\rm (ii)}=
\sum_{m_{00},m_{k\ell},h\geq 1}
\zeta_{(k)}^{H, 00}\left(
\mbox{\raisebox{-2mm}{\ytableausetup{boxsize=1.7em} 
\begin{ytableau}
 s_{00} & s_{10}  & \cdots & s_{k' 0}
\end{ytableau}}}\;
, m_{00}\right)_{\leq m_{00}+m_{k\ell}+h}\cdot
(m_{00}+m_{k\ell}+h)^{-s_{k 0}}
$$
$$\cdot 
\zeta^{H, 00}_{(1^{\ell})}\left(
\mbox{\raisebox{6mm}{\ytableausetup{boxsize=1.7em} 
\begin{ytableau}
s_{k\ell}\\
\vdots\\
s_{k2}\\
s_{k1}\\
\end{ytableau}}}\;, m_{k\ell}\right)_{<m_{00}+m_{k\ell}+h},
$$
where $k'=k-1$.

In the proof of Lemma \ref{sumII}, the key decomposition is \eqref{keystep2}.
Using this decomposition repeatedly, we arrive at \eqref{tuika-tuika}, which can be
written as
\begin{align}
{\sum}_{\rm (ii)}&=
\sum_{m_{00},m_{k\ell},h\geq 1}
\sum_{\mu=0}^{k-1}(-1)^{\mu}
 \zeta_{(k-\mu)}^{H, 00}\left(
\mbox{\raisebox{-2mm}{\ytableausetup{boxsize=1.8em} 
\begin{ytableau}
 s_{00} & s_{10}  & \cdots & s_{k_\mu' 0}
\end{ytableau}}}\;
, m_{00}\right)\notag\\
& \cdot
\zeta^H_{(1^{\mu})}\left(
\mbox{\raisebox{6mm}{\ytableausetup{boxsize=1.8em} 
\begin{ytableau}
s_{k'0}\\
\vdots\\
s_{k_\mu'' 0}\\
s_{k_\mu 0}\\
\end{ytableau}}}\;, m_{00}+m_{k\ell}+h\right)
\cdot (m_{00}+m_{k\ell}+h)^{-s_{k 0}}\notag\\
& \cdot 
\sum_{\nu=0}^{\ell-1}(-1)^{\nu}
\zeta^{H,0}_{(\nu)}\left(
\mbox{\raisebox{-2mm}{\ytableausetup{boxsize=1.8em} 
\begin{ytableau}
s_{k1} & s_{k2} & \cdots & s_{k\nu}
\end{ytableau}}}\;, m_{00}+m_{k\ell}+h\right) \cdot 
\zeta^{H, 00}_{(1^{\ell-\nu})}\left(
\mbox{\raisebox{6mm}{\ytableausetup{boxsize=1.8em} 
\begin{ytableau}
s_{k\ell}\\
\vdots\\
s_{k\nu''}\\
s_{k\nu'}\\
\end{ytableau}}}\;, m_{k\ell}\right)\notag
 \\
&=\sum_{m_{00}, m_{k\ell}, h\geq 1}
\sum_{\mu=0}^{k-1}\sum_{\nu=0}^{\ell-1}
(-1)^{\mu+\nu}
\cdot
\zeta_{(k-\mu)}^{H,00}\left(
\mbox{\raisebox{-2mm}{\ytableausetup{boxsize=1.8em} 
\begin{ytableau}
 s_{00} & s_{10}  & \cdots & s_{k_{\mu}' 0}
\end{ytableau}}}\;
, m_{00}\right)\notag\\
&\cdot 
\zeta^{H, 00}_{(\nu+1, 1^{\mu})}\left(
\mbox{\raisebox{6mm}{\ytableausetup{boxsize=1.8em} 
\begin{ytableau}
s_{k0} & s_{k1} & \cdots & s_{k\nu}\\
s_{k'0}\\
\vdots\\
s_{k_{\mu}0}\\
\end{ytableau}}}\;
, m_{00}+m_{k\ell}+h\right)%
%
\cdot 
\zeta^{H, 00}_{(1^{\ell-\nu})}\left(
\mbox{\raisebox{6mm}{\ytableausetup{boxsize=1.8em} 
\begin{ytableau}
s_{k\ell}\\
\vdots\\
s_{k\nu''}\\
s_{k\nu'}\\
\end{ytableau}}}\;, m_{k\ell}\right),\label{picZ}
\end{align}
where $k'=k-1$, $k_\mu=k-\mu$, $k_\mu'=k-(\mu+1)$, $k_{\mu}''=k-\mu+1$, $\nu'=\nu+1$ and $\nu''=\nu+2$.
This is the expression in Lemma \ref{sumII}.

Moreover, the last expression \eqref{picZ} shows
\begin{align}
Z_{ \mu, \nu}=& \sum_{m_{00}, m_{k\ell}, h\geq 1}
\zeta_{(k-\mu)}^{H,00}\left(
\mbox{\raisebox{-2mm}{\ytableausetup{boxsize=1.8em} 
\begin{ytableau}
 s_{00} & s_{10}  & \cdots & s_{k_{\mu}' 0}
\end{ytableau}}}\;
, m_{00}\right)\cdot\notag\\
& 
\zeta^{H, 00}_{(\nu+1, 1^{\mu})}\left(
\mbox{\raisebox{6mm}{\ytableausetup{boxsize=1.8em} 
\begin{ytableau}
s_{k0} & s_{k1} & \cdots & s_{k\nu}\\
s_{k'0}\\
\vdots\\
s_{k_{\mu}0}\\
\end{ytableau}}}\;
, m_{00}+m_{k\ell}+h\right)%
%
\cdot 
\zeta^{H, 00}_{(1^{\ell-\nu})}\left(
\mbox{\raisebox{6mm}{\ytableausetup{boxsize=1.8em} 
\begin{ytableau}
s_{k\ell}\\
\vdots\\
s_{k\nu''}\\
s_{k\nu'}\\
\end{ytableau}}}\;, m_{k\ell}\right),\label{picZZ}
\end{align}
which gives a pictiroal interpretation of Theorem \ref{ThII2}.

From \eqref{picZ} and \eqref{picZZ} we see that Lemma \ref{sumII} and Theorem \ref{ThII2},
which represent the most essential part of the proof of Theorem \ref{HurwitzTheorem},
may be regarded as a procedure of expressing a Schur multiple zeta-function of
anti-hook type in terms of zeta-functions of hook type.

\section{The cases $k+\ell\leq 3$}
When $k+\ell\leq 3$, the structure of $\zeta_{\theta}({\pmb s})$ is rather simple,
and in these cases, we can develop an alternative simpler way of computations.
In this final section we present such computations.    
The alternative argument is embodied in the process (ii) in the following examples.
Note that this method
cannot be applied to the case $k+\ell>3$. 

\begin{Example}
We consider the case $(k, \ell)=(1, 1)$.\\
When
\hspace{1cm}${\pmb s}=
\ytableausetup{boxsize=normal}  
\begin{ytableau}
  \none &   s_{11}\\
 s_{00} & s_{10}
\end{ytableau}$\quad, 
 \vspace{3mm}\\
\begin{equation}\label{rootzeta11}
 \zeta_{\theta}({\pmb s}) =
 \sum_{M\in {\rm{SSYT}}( \theta)}
 {m_{00}^{-s_{00}}m_{10}^{-s_{10}}m_{11}^{-s_{11}}}.
\end{equation}
(i) Consider the case $m_{10}\leq m_{00}+m_{11}$.
From Theorem \ref{ThII1} and Theorem \ref{ThI}, we have
\begin{align*}
{\sum}_{\rm (i)}
&=\zeta_{3, 2}^{\bullet}(0^3 | s_{11}, s_{00} | s_{10}, A_3)\\
&=\zeta_3(0^3 | s_{11}, s_{00} | s_{10}, A_3)+\zeta_2(s_{11}, 0 | s_{00}+s_{10}, A_2)
+\zeta_2(s_{11}, s_{00} | s_{10}, A_2).
\end{align*}
%
(ii) Consider the case $m_{10}>m_{00}+m_{11}$.
When we write $m_{10}=m_{00}+m_{11}+h$ ($h\geq 1$),
\begin{eqnarray*}
{\sum}_{\rm (ii)}&=&\sum_{\substack{M\in {\rm{SSYT}}( \theta)\\m_{10}> m_{00}+m_{11}}}
{m_{00}^{-s_{00}}m_{10}^{-s_{10}}m_{11}^{-s_{11}}}
=
\sum_{\substack{m_{00}, m_{11}, h\geq 1}}
{m_{00}^{-s_{00}}(m_{00}+m_{11}+h)^{-s_{10}}m_{11}^{-s_{11}}}\\
&=&\zeta_3(s_{00}, s_{11}, 0 | 0^2 | s_{10}, A_3).
\end{eqnarray*}
%
\noindent Combining (i) and (ii), we have
\begin{align}\label{ex0}
&\zeta_{\theta}({\pmb s})
=\zeta_{3, 2}^{\bullet}(0^3 | s_{11}, s_{00} | s_{10}, A_3)
+\zeta_3(s_{00}, s_{11}, 0 | 0^2 | s_{10}, A_3)\\
&=\zeta_3(0^3 | s_{11}, s_{00} | s_{10}, A_3)+\zeta_2(s_{11}, 0 | s_{00}+s_{10}, A_2)
+\zeta_2(s_{11}, s_{00} | s_{10}, A_2)
+\zeta_3(s_{00}, s_{11}, 0 | 0^2 | s_{10}, A_3).\notag
\end{align}
This agrees with \eqref{example4-1}.

\end{Example}
\noindent
\begin{Example} 
We consider the case $(k, \ell)=(1, 2)$.\\
When
\hspace{1cm}${\pmb s}=
\ytableausetup{boxsize=normal}  
\begin{ytableau}
  \none &   s_{12}\\
  \none &   s_{11}\\
 s_{00} & s_{10}
\end{ytableau}$\quad,
 \vspace{3mm}\\
\begin{equation}\label{rootzeta11}
 \zeta_{\theta}({\pmb s}) =
 \sum_{M\in {\rm{SSYT}}( \theta)}
 {m_{00}^{-s_{00}}m_{10}^{-s_{10}}m_{11}^{-s_{11}}m_{12}^{-s_{12}}}.
\end{equation}
(i) Consider the case $m_{10}\leq m_{00}+m_{12}$. 
From Theorem \ref{ThII1}, 
\begin{eqnarray*}
{\sum}_{\rm (i)}
&=&\zeta_{4, 2}^{\bullet}(0^4 | s_{12}, 0^2| s_{11}, s_{00} | s_{10}, A_4).
\end{eqnarray*}
%
(ii) Consider the case $m_{10}>m_{00}+m_{12}$.
When we write $m_{10}=m_{00}+m_{12}+h$ ($h\geq 1$),
\begin{eqnarray*}
{\sum}_{\rm (ii)}
=
\sum_{\substack{m_{00}, m_{12}, b_1, h\geq 1\\1\leq b_1<m_{00}+h}}
{m_{00}^{-s_{00}}(m_{00}+m_{12}+h)^{-s_{10}}(m_{12}+b_1)^{-s_{11}}m_{12}^{-s_{12}}}.
\end{eqnarray*}
When  $h\geq b_1$, we put $h=b_1+q_1$ ($q_1\geq 0$) and so, $m_{10}=m_{00}+m_{12}+b_1+q_1$.
Then
\begin{eqnarray*}
{\sum_{h\geq b_1}}_{\substack{\vspace{-6mm}\\\rm (ii)}}&=&
\sum_{m_{00}, m_{12}, b_1\geq 1, q_1\geq 0}
{m_{00}^{-s_{00}}(q_1+m_{00}+m_{12}+b_1)^{-s_{10}}(m_{12}+b_1)^{-s_{11}}m_{12}^{-s_{12}}}\\
&=&
\zeta_{4,1}^{\bullet}(0, s_{00}, s_{12}, 0 | 0, 0, s_{11} | 0^2 | s_{10}, A_4).
\end{eqnarray*}
When  $0<h< b_1$, we put $b_1=h+p_1$ ($p_1\geq 1$).
Then the condition $b_1<m_{00}+h$ implies $p_1<m_{00}$, hence we put $m_{00}=p_1+b_0$ 
($b_0\geq 1$) and so,
\begin{eqnarray*}
{\sum_{h< b_1}}_{\substack{\vspace{-6mm}\\\rm (ii)}}
&=&
\sum_{m_{12}, h, p_1, b_0\geq 1}
{(p_1+b_0)^{-s_{00}}(p_1+b_0+m_{12}+h)^{-s_{10}}(m_{12}+h+p_1)^{-s_{11}}m_{12}^{-s_{12}}}\\
&=&
\sum_{m_{12}, h, p_1, b_0\geq 1}
{(b_0+p_1)^{-s_{00}}(b_0+p_1+m_{12}+h)^{-s_{10}}(p_1+m_{12}+h)^{-s_{11}}m_{12}^{-s_{12}}}\\
&=&
\zeta_4(0, 0, s_{12}, 0 | s_{00}, 0, 0 | 0, s_{11} | s_{10}, A_4).
\end{eqnarray*}
%
%
\noindent Combining (i) and (ii), we have
\begin{eqnarray}\label{ex1}
\zeta_{\theta}({\pmb s})
&=&\zeta_{4, 2}^{\bullet}(0^4 | s_{12}, 0^2| s_{11}, s_{00} | s_{10}, A_4)
+\zeta_{4,1}^{ \bullet}(0, s_{00}, s_{12}, 0 | 0, 0, s_{11} | 0^2 | s_{10}, A_4)\\
&+&\zeta_4(0, 0, s_{12}, 0 | s_{00}, 0, 0 | 0, s_{11} | s_{10}, A_4).\notag
\end{eqnarray}

We now compare this result with Theorem \ref{HurwitzTheorem}.
The case $(k,\ell)=(1,2)$ of Theorem \ref{HurwitzTheorem} implies that
$$
\zeta_{\theta}({\pmb s})
=\zeta_{4, 2}^{\bullet}(0^4 | s_{12}, 0^2| s_{11}, s_{00} | s_{10}, A_4)
+Z_{00}({\pmb s};(1,2))-Z_{01}({\pmb s};(1,2)).
$$
From this and \eqref{ex1}, we obtain the following identity:
\begin{align}\label{identity-1}
Z_{00}({\pmb s};(1,2))-&Z_{01}({\pmb s};(1,2))=
\zeta_{4,1}^{ \bullet}(0, s_{00}, s_{12}, 0 | 0, 0, s_{11} | 0^2 | s_{10}, A_4)\\
&+\zeta_4(0, 0, s_{12}, 0 | s_{00}, 0, 0 | 0, s_{11} | s_{10}, A_4).\notag
\end{align}

It is possible to verify this fact directly.
In fact, by definition,
$$
Z_{00}({\pmb s};(1,2))=\sum_{m_{00},m_{12}\geq 1}m_{00}^{-s_{00}}m_{12}^{-s_{12}}
\sum_{h\geq 1}(m_{00}+m_{12}+h)^{-s_{10}}\sum_{b_1\geq 1}(m_{12}+b_1)^{-s_{11}}.
$$
Divide this sum into two parts, according to the conditions
$b_1<m_{00}+h$ and $b_1\geq m_{00}+h$.
The former part is exactly ${\sum}_{\rm (ii)}$ and hence is equal to the right-hand side
of \eqref{identity-1} by the above argument.
The latter part is equal to $Z_{01}({\pmb s};(1,2))$ by its definition.
Therefore \eqref{identity-1} follows.
\end{Example}

\noindent
\begin{Example} 
We consider the case $(k, \ell)=(2, 1)$.\\
When
\hspace{1cm}${\pmb s}=
\ytableausetup{boxsize=normal}  
\begin{ytableau}
  \none &   \none &   s_{21}\\
 s_{00} & s_{10} & s_{20}
\end{ytableau}$\quad,
 \vspace{3mm}\\
\begin{equation}\label{rootzeta11}
 \zeta_{\theta}({\pmb s}) =
 \sum_{M\in {\rm{SSYT}}( \theta)}
 {m_{00}^{-s_{00}}m_{10}^{-s_{10}}m_{20}^{-s_{20}}m_{21}^{-s_{21}}}.
\end{equation}
(i) Consider the case $m_{20}\leq m_{00}+m_{21}$.
From Theorem \ref{ThII1},
\begin{eqnarray*}
{\sum}_{\rm (i)}
&=&\zeta_{4, 3}^{\bullet}(0^4 | 0^2, s_{00}| s_{21}, s_{10} | s_{20}, A_4).
\end{eqnarray*}
%
(ii) Consider the case $m_{20}>m_{00}+m_{21}$.
When we write $m_{20}=m_{00}+m_{21}+h$ ($h\geq 1$),
\begin{eqnarray*}
{\sum}_{\rm (ii)}
=
\sum_{\substack{m_{00}, m_{21}, h\geq 1\\0\leq a_1\leq m_{21}+h}}
{m_{00}^{-s_{00}}(m_{00}+a_1)^{-s_{10}}(m_{00}+m_{21}+h)^{-s_{20}}m_{21}^{-s_{21}}}.
\end{eqnarray*}
%
When  $h\geq a_1$, we put $h=a_1+q_1$ ($q_1\geq 0$) and so, $m_{20}=m_{00}+m_{21}+a_1+q_1$.
Then
\begin{eqnarray*}
{\sum_{h\geq a_1}}_{\substack{\vspace{-6mm}\\\rm (ii)}}
&=&
\sum_{\substack{m_{00}, m_{21}\geq 1\\ 
a_1, q_1\geq 0}}
{m_{00}^{-s_{00}}(m_{00}+a_1)^{-s_{10}}(m_{00}+m_{21}+a_1+q_1)^{-s_{20}}m_{21}^{-s_{21}}}\\
&=&
\sum_{\substack{m_{00}, m_{21}\geq 1\\
a_1, q_1\geq 0}}
{m_{00}^{-s_{00}}(a_1+m_{00})^{-s_{10}}(q_1+a_1+m_{00}+m_{21})^{-s_{20}}m_{21}^{-s_{21}}}\\
&=&
\zeta_{4, 2}^{\bullet}(0, 0, s_{00}, s_{21} | 0, s_{10}, 0 | 0^2 | s_{20}, A_4).
\end{eqnarray*}
When  $1\leq h< a_1$, we put $a_1=h+p_1$ ($p_1\geq 1$).
Then the condition $a_1\leq m_{21}+h$ implies $p_1\leq m_{21}$, hence we put
$m_{21}=p_1+a_2$ ($a_2\geq 0$) and so,
\begin{eqnarray*}
{\sum_{h< a_1}}_{\substack{\vspace{-6mm}\\\rm (ii)}}
&=&
\sum_{\substack{m_{00}, h, p_1\geq 1\\
a_2\geq 0}}
{m_{00}^{-s_{00}}(m_{00}+h+p_1)^{-s_{10}}(m_{00}+p_1+a_2+h)^{-s_{20}}(p_1+a_2)^{-s_{21}}}\\
&=&
\sum_{\substack{m_{00}, h, p_1\geq 1\\
a_2\geq 0}}
{m_{00}^{-s_{00}}(p_1+m_{00}+h)^{-s_{10}}(a_2+p_1+m_{00}+h)^{-s_{20}}(a_2+p_1)^{-s_{21}}}\\
&=&
\zeta_{4, 1}^{\bullet}(0, 0, s_{00}, 0 | s_{21}, 0, 0 | 0, s_{10} | s_{20}, A_4).
\end{eqnarray*}

Combining (i) and (ii), we have
\begin{eqnarray*}\label{ex1bis}
\zeta_{\theta}({\pmb s})
&=&\zeta_{4, 3}^{\bullet}(0^4 | 0^2, s_{00}| s_{21}, s_{10} | s_{20}, A_4)\\
&+&\zeta_{4, 2}^{ \bullet}(0, 0, s_{00}, s_{21} | 0, s_{10}, 0 | 0^2 | s_{20}, A_4)
+\zeta_{4, 1}^{ \bullet}(0, 0, s_{00}, 0 | s_{21}, 0, 0 | 0, s_{10} | s_{20}, A_4).
\end{eqnarray*}

Comparing this result with Theorem \ref{HurwitzTheorem}, we obtain
\begin{align}\label{identity-2}
Z_{00}({\pmb s};(2,1))-&Z_{10}({\pmb s};(2,1))=
\zeta_{4,2}^{ \bullet}(0, 0, s_{00}, s_{21} | 0, s_{10}, 0 | 0^2 | s_{20}, A_4)\\
&+\zeta_4(0, 0, s_{00}, 0 | s_{21}, 0, 0 | 0, s_{10} | s_{20}, A_4).\notag
\end{align}
We can verify this fact directly, as in the case of \eqref{identity-1}.

\end{Example}
\bigskip

{\bf Acknowledgments}
The authors express their thanks to Professor Hideki Murahara, Professor Shuji Yamamoto
and the anonymous referee
for useful comments.
%
%
%
%

%
\bigskip
\noindent
\textsc{Kohji Matsumoto}\\
Graduate School of Mathematics, \\
Nagoya University, \\
Furo-cho, Chikusa-ku, Nagoya, 464-8602, Japan \\
 \texttt{kohjimat@math.nagoya-u.ac.jp}

\medskip

\noindent
\textsc{Maki Nakasuji}\\
Department of Information and Communication Science, Faculty of Science, \\
 Sophia University, \\
 7-1 Kio-cho, Chiyoda-ku, Tokyo, 102-8554, Japan \\
 \texttt{nakasuji@sophia.ac.jp}

\end{document}